\numberwithin{equation}{section}
\newtheorem{thm}{Theorem}[section]
\newtheorem{prop}[thm]{Proposition}
\newtheorem{lem}[thm]{Lemma}
\newtheorem{conj}[thm]{Conjecture}
\newtheorem{dfn}[thm]{Definition}
\newtheorem{remark}[thm]{Remark}
\numberwithin{equation}{section}
\newcommand{\F}{\mathbb{F}}
\newcommand{\N}{\mathbb{N}}
\newcommand{\Q}{\mathbb{Q}}
\newcommand{\Z}{\mathbb{Z}}
\newcommand{\mcO}{\mathcal{O}}
\newcommand{\mfm}{\mathfrak{m}}
\newcommand{\mfn}{\mathfrak{n}}
\newcommand{\mfp}{\mathfrak{p}}
\newcommand{\mfq}{\mathfrak{q}}
\newcommand{\mfP}{\mathfrak{P}}
\newcommand{\GL}{\mathrm{GL}}
\newcommand{\Cl}{\mathrm{Cl}}
\newcommand{\Gal}{\mathrm{Gal}}
\def\1{1\!\!1}
\newcommand{\mrm}[1]{\mathrm{#1}}
\title[On the solutions of equations of signature $(p,p,3)$ over $K$]{Asymptotic solutions of the generalized Fermat-type equation of signature $(p,p,3)$ over totally real number fields}
\author[S. Sahoo]{Satyabrat Sahoo}
\address[S. Sahoo]{Yau Mathematical Sciences Center, Tsinghua University, 
	Beijing 100084, China.}
\email{satyabrat.sahoo.94@gmail.com}
\author[N. Kumar]{Narasimha Kumar}
\address[N. Kumar]{Department of Mathematics, Indian Institute of Technology Hyderabad, Kandi, Sangareddy 502285, INDIA.}
\email{narasimha@math.iith.ac.in}
\keywords{Fermat-Type equations, Signature $(p,p,3)$,  Semi-stability, Irreducibility, Modularity, Level lowering}
\subjclass[2010]{Primary 11D41, 11R80; Secondary  11F80, 11G05, 11R04}
\date{\today}
\begin{document}
	\maketitle
	\begin{abstract}
In this article, we study the asymptotic solutions of the generalized Fermat-type equation of signature $(p,p,3)$ over totally real number fields $K$, i.e., $Ax^p+By^p=Cz^3$ with prime exponent $p$ and $A,B,C \in \mcO_K \setminus \{0\}$. For certain class of fields $K$, we prove that $Ax^p+By^p=Cz^3$ has no asymptotic solutions over $K$ (resp., solutions of certain type over $K$) with restrictions on $A,B,C$ (resp., for all $A,B,C \in \mcO_K \setminus \{0\}$). Finally, we present several local criteria over $K$.
\end{abstract}

\maketitle
  \section{Introduction}
   The study of Diophantine equations is an interesting and extremely fascinating area in number theory. The most important example of the Diophantine equation is the Fermat equation $x^n+y^n=z^n$. In 1637, Fermat claimed that the equation $x^n+y^n=z^n$ with positive integers $n \geq 3$ has no non-trivial coprime integer solutions. This was settled affirmatively by Wiles. The main inputs in the proof were the modularity of semi-stable elliptic curves $E/\Q$ (cf.~\cite{W95},~\cite{TW95}), irreducibility of the mod $p$ Galois representations $\bar{\rho}_{E,p}$ attached to $E/\Q,p$ (cf. \cite{M78}), and the level-lowering theorem of Ribet for $\bar{\rho}_{E,p}$ (cf. \cite{R90}).
   
   Since then, there has been a lot of progress in understanding  the solutions of the generalized Fermat-type equation
   \begin{equation}
\label{generalized Fermat eqn}
 	Ax^p+By^q=Cz^r
\end{equation}
with coprime integers $A,B,C$ and integers $p,q,r \geq 2$ with $\frac{1}{p} +\frac{1}{q}+ \frac{1}{r} <1$. We call $(p,q,r)$ as the signature of \eqref{generalized Fermat eqn}. In~\cite{DG95}, Darmon and Granville showed that the generalized Fermat-type equation~\eqref{generalized Fermat eqn}, with fixed $A,B,C, p,q,r$, has only finitely many non-trivial coprime integer solutions.
 
\subsection{Literature Survey:}
Throughout this article, $K$ denotes a totally real number field, and $\mcO_K$ denotes the ring of integers of $K$. Let $n \in \N$ and $p \in \mathbb{P}:= \mrm{Spec}(\Z)$. In~\cite{DM97}, Darmon and Merel proved that the equation $x^n+y^n=z^3$ with exponent $n \geq 3$ has no non-trivial coprime integer solutions. In \cite{BVY04}, Bennett, Vatsal, and Yazdani studied the integer solution of the generalized Fermat-type equation of signature $(n,n,3)$, i.e., $Ax^n+By^n=Cz^3$, and proved that the equation $x^n+y^n=pz^3$ has no coprime integer solutions with $|xy|>1$ and prime $n>p^{4p^2}$. Recently, in \cite{M22}, Mocanu studied the asymptotic solutions (of certain type over $K$) for the equation $x^p+y^p=z^3$ with exponent $p$ (cf. Definition~\ref{asymptotic solution} for the definition of the asymptotic solutions). More recently, in~\cite{IKO23}, I\c{s}ik, Kara, and \"Ozman studied the asymptotic solution of certain type over number fields $F$ for the equation $x^p+y^p=z^3$ with exponent $p$, by assuming two modularity conjectures (cf. \cite[Conjectures 2.2, 2.3]{IKO23}) when the narrow class number of $F$ is $1$.

In this article, we generalize the work of \cite{M22} and study the asymptotic solutions of the generalized Fermat-type equation of signature $(p,p,3)$, i.e., $Ax^p+By^p=Cz^3$ with exponent $p$ over $K$, where $A,B,C \in \mcO_K\setminus \{0\}$. More precisely,
\begin{itemize}
	\item In Theorem~\ref{main result1 for x^2=By^p+Cz^p Type I}, we prove that for certain class of fields $K$, the equation $Ax^p+By^p=Cz^3$ with prime exponent $p$ has no asymptotic solution in $W_K$ (cf. Definition~\ref{def for W_K} for $W_K$ and Definition~\ref{asymptotic solution} for the asymptotic solution).
	
	\item In Theorem~\ref{main result1 for Ax^p+By^p=qz^3 over K}, we prove that for certain class of fields $K$, the equation $Ax^p+By^p=Cz^3$ with prime exponent $p$ has no asymptotic solution in $\mcO_K^3\setminus S$ for some explicit set $S \subseteq \mcO_K^3$, whenever $3$ is inert in $K$, $v_\mfP(A)=1$, $ v_\mfP(B) \in \{0,2\}$ for $\mfP |3$, and $C \in \mcO_K^*$ or $C=uq$ for $u \in \mcO_K^*$ and $q \in \mathbb{P} \setminus \{ 3\}$ (cf. \S \ref{section for main result over K} for $S$). In fact, $S = \emptyset$ if $A,B,C \in \Z \setminus \{0\}$ (cf.
	Proposition~\eqref{prop for S is empty}).
\end{itemize}
The proofs of Theorems~\ref{main result1 for x^2=By^p+Cz^p Type I},~\ref{main result1 for Ax^p+By^p=qz^3 over K} depend upon certain explicit bounds on the solutions of the equation
\begin{equation}
	\label{keyequation}
\alpha +\beta =\gamma^3
\end{equation}
with $\alpha, \beta \in \mcO_{S_K^{\prime}}^\ast, \gamma \in \mcO_{S_K^{\prime}}$
 (cf. \S \ref{notations section for Ax^p+By^p=Cz^3} for the definitions of
$S^\prime_K, \mcO_{S_K^{\prime}}, \mcO_{S_K^{\prime}}^\ast$).
Finally, we provide several local criteria of $K$ for Theorem~\ref{main result1 for x^2=By^p+Cz^p Type I}.

\subsection{Literature for the equations of signature $(p,p,p)$ and $(p,p,2)$}
In~\cite{FS15}, Freitas and Siksek studied the asymptotic solutions of the Fermat equation $x^p+y^p=z^p$ with exponent $p$ over $K$. In \cite{D16}, Deconinck extended the work of~\cite{FS15} to the generalized Fermat equation $Ax^p+By^p=Cz^p$ with $A, B, C$ odd. In \cite{KS23}, we studied the asymptotic solutions of the equation $x^p+y^p=2^rz^p$ with exponent $p$ and $r \in \N$. In general, there has been a lot of progress, which is summarized below, for the generalized Fermat-type equation of signature $(n,n,2)$ over $K$. Let $r,n \in \N$ and $p\in \mathbb{P}:= \mrm{Spec}(\Z)$.
	\begin{table}[h!]
		\centering
	\begin{tabular}{|c|c|c|}  
		\hline  
		Fermat-type equation & $\Q$-solutions  & asymptotic $K$-solutions \\ \hline
		$x^n+y^n=z^2$ & \cite{DM97} &~\cite{IKO20}, \cite{M22}, \cite{KS23} for $n=p$\\ \hline
		$x^p+2^ry^p=z^2$& \cite{I03}, \cite{S03}& \cite{KS}\\ \hline 
		$x^p+2^ry^p=2z^2$& \cite{I03}& \cite{KS}\\ \hline  
		$Ax^n+By^n=Cz^2$& \cite{BS04} & \cite{KS} for $n=p$ and $C=1$ or $2$\\ \hline
	\end{tabular}  
\end{table}
%
%
	
	\subsection{Preliminaries}
	\label{section for preliminary}
	 Let $P :=\mrm{Spec}(\mcO_K)$ and $\mfn$ be an ideal of $\mcO_K$.
	 Let $E/K$ be an elliptic curve of conductor $\mfn$. 
	 Let $\bar{\rho}_{E,p} : G_K:=\Gal(\overline{K}/K) \rightarrow \mathrm{Aut}(E[p]) \simeq \GL_2(\F_p)$ be the residual Galois representation of $G_K$, induced by the action of $G_K$ on the $p$-torsion of $E$. For any $\mfp \in P$, let $I_\mfp$ be the inertia subgroup of $G_K$ at $\mfp$. For any Hilbert modular newform $f$ over $K$ of parallel weight $k$, level $\mfn$ with coefficient field $\Q_f$ and any  $\omega \in \mrm{Spec}(\mcO_{\Q_f})$, let $\bar{\rho}_{f, \omega}: G_K \rightarrow \GL_2(\F_\omega)$ be the residual Galois representation attached to $f, \omega$. The following conjecture is a generalization of the Eichler-Shimura theorem over $\Q$ (cf. \cite[Conjecture 1]{FS15}).
	\begin{conj}[Eichler-Shimura]
		\label{ES conj}
		Let $f$ be a Hilbert modular newform over $K$ of parallel weight $2$, level $\mfn$, and with coefficient field $\Q_f= \Q$. Then, there exists an elliptic curve $E_f /K$ with conductor $\mfn$ having same $L$-function as $f$.
	\end{conj}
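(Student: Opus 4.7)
The plan is to construct $E_f$ as a quotient of the Jacobian of a suitable Shimura curve and identify it through Hecke eigenvalue matching, which is the standard strategy available when $[K:\Q]$ is odd or when $f$ is discrete series at some finite place. First I would attach to $f$ an automorphic representation $\pi_f$ of $\GL_2(\A_K)$ of parallel weight $2$; since the coefficient field is $\Q$, all Hecke eigenvalues lie in $\Z$. Next, using the Jacquet-Langlands correspondence, I would transfer $\pi_f$ to a representation $\pi_f^B$ on the multiplicative group of a quaternion algebra $B/K$ that is split at exactly one infinite place of $K$, with finite ramification dictated by the ramification of $\pi_f$.

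Given such $B$, the associated Shimura curve $X^B$ of appropriate level has Jacobian $J$ carrying an action of the Hecke algebra away from the level, and the $\pi_f^B$-isotypic quotient of $J$ cuts out an abelian subvariety $A_f$ of dimension equal to $[\Q_f : \Q]$. Under the hypothesis $\Q_f = \Q$, the variety $A_f$ is therefore an elliptic curve over $K$, and I would set $E_f := A_f$. To match $L$-functions, I would use Eichler-Shimura congruence relations on $X^B$ to identify traces of Frobenius on $A_f$ with Hecke eigenvalues of $f$ at primes of good reduction, and invoke local-global compatibility of the Langlands correspondence at the ramified primes; together with a check that the conductor of $A_f$ equals $\mfn$, this completes the construction in these accessible cases.

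The hard part is the case when $[K:\Q]$ is even and $\pi_f$ is principal series at every finite place. Then no quaternion algebra over $K$ split at exactly one infinite place admits the required Jacquet-Langlands transfer, and the only geometric realization of $\pi_f$ lives in the cohomology of the $[K:\Q]$-dimensional Hilbert modular variety, which does not naturally produce a one-dimensional motive. One can still attach a compatible system $\{\rho_{f,\ell}\}$ of two-dimensional $\ell$-adic Galois representations to $f$ unconditionally, so the goal reduces to showing that this system is motivic of weight one, i.e.\ arises from an elliptic curve over $K$. I would attempt this via a Faltings-Serre type comparison with a candidate curve, or through potential modularity and descent arguments, but neither approach is currently known to succeed in full generality; this is precisely why the statement remains a conjecture in the literature and is invoked, rather than proved, in the rest of this paper.
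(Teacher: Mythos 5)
The statement you were asked about is labelled a \emph{conjecture} in the paper, and the paper offers no proof of it: it is only ever invoked as a hypothesis (the condition $(ES)$ in \S 3), with the authors citing Darmon's Theorem 7.7 for the cases where $[K:\Q]$ is odd or where $v_\mfq(\mfn)=1$ for some prime $\mfq$, and a result of Freitas--Siksek for a partial converse in terms of residual representations. So there is no proof in the paper to compare yours against. That said, your write-up is an accurate account of the state of the art: the Jacquet--Langlands transfer to a quaternion algebra split at exactly one infinite place, the isotypic piece of the Jacobian of the resulting Shimura curve, and the Eichler--Shimura congruence relations together with local-global compatibility (and Carayol's conductor comparison) are exactly the ingredients behind Darmon's theorem, and your parity analysis correctly identifies when such a quaternion algebra exists --- namely when $[K:\Q]$ is odd, or when $[K:\Q]$ is even and $\pi_f$ is discrete series at some finite place (which is guaranteed when some prime exactly divides $\mfn$). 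Your final paragraph correctly isolates the genuinely open case ($[K:\Q]$ even and $\pi_f$ principal series everywhere, where the representation lives only in the cohomology of the higher-dimensional Hilbert modular variety) and correctly concludes that no complete proof is currently available. In short: you have not proved the statement, but no one has, and your sketch of the accessible cases and of the obstruction is consistent with how the paper treats the conjecture.
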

    	In~\cite[Theorem 7.7]{D04}, Darmon proved Conjecture~\ref{ES conj} over $K$, when 
    $[K: \Q] $ is odd or there exists some $\mfq \in P$ such that $v_\mfq(\mfn) = 1$. 
    In \cite[Corollary 2.2]{FS15}, Freitas and Siksek provided a partial answer to Conjecture~\ref{ES conj} in terms of the residual Galois representations $\bar{\rho}_{E,p}$ attached to $E$. 
%
For any $\mfq \in P$, let $\Delta_\mfq$ be the minimal discriminant of $E$ at $\mfq$. Let
 \begin{equation}
 	\label{conductor of elliptic curve}
 	\mfm_p:= \prod_{ p|v_\mfq(\Delta_\mfq), \ \mfq ||\mfn} \mfq \text{ and } \mfn_p:=\frac{\mfn}{\mfm_p}.
 \end{equation}
  For any rational prime $p$, let $\zeta_p$ be a primitive $p$-th root of unity. 
 We end this section with a result of Freitas and Siksek (cf.~\cite[Theorem 7]{FS15}) on the level-lowering of the residual Galois representations $\bar{\rho}_{E,p}$ over $K$.
 \begin{thm}
 	\label{level lowering of mod $p$ repr}
 	Let $E$ be an elliptic curve over $K$ of conductor
 	$\mfn$. Let $p$ be a rational prime. Suppose that the following conditions hold:
 	\begin{enumerate}
 		\item  For $p \geq 5$, the ramification index $e(\mfq /p) < p-1$ for all $\mfq |p$, and $\Q(\zeta_p)^+ \nsubseteq K$;
 		\item $E/K$ is modular and $\bar{\rho}_{E,p}$ is irreducible;
 		\item $E$ is semi-stable at all $\mfq |p$, and $p| v_\mfq(\Delta_\mfq)$ for all $\mfq |p$.
 	\end{enumerate}
 	Then there exists a Hilbert modular newform $f$ over $K$ of parallel weight $2$, level $\mfn_p$, and some prime $\omega$ of $\Q_f$ such that $\omega | p$ and $\bar{\rho}_{E,p} \sim \bar{\rho}_{f,\omega}$.
 \end{thm}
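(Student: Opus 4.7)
The plan is to combine the modularity hypothesis with a prime-by-prime local analysis at the primes dividing $\mfm_p$, and then invoke Jarvis's Hilbert-modular analogue of Ribet's level-lowering theorem. First, since $E/K$ is modular, by Conjecture~\ref{ES conj} (which is part of the hypothesis via ``modular'') there exists a Hilbert modular newform $g$ of parallel weight $2$, level $\mfn$, and with $\Q_g = \Q$ having the same $L$-function as $E$; thus for some prime $\omega_0 \mid p$ of $\Q_g$ we have an isomorphism $\bar{\rho}_{E,p} \sim \bar{\rho}_{g,\omega_0}$. The goal then reduces to stripping off the primes $\mfq \mid \mfm_p$ from the level.

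Next, I would fix $\mfq$ dividing $\mfm_p$, so that $\mfq \| \mfn$ and $p \mid v_\mfq(\Delta_\mfq)$, and show that $\bar{\rho}_{E,p}$ is ``locally good'' at $\mfq$ in the sense needed by level lowering. Case 1: $\mfq \nmid p$. Since $E$ has multiplicative reduction at $\mfq$, the Tate uniformization realizes $E$ over $K_\mfq$ as $\mathbb{G}_m / q_E^{\Z}$ with $v_\mfq(q_E) = v_\mfq(\Delta_\mfq)$. The image of $I_\mfq$ in $\bar{\rho}_{E,p}$ is generated (up to unramified twist) by the class of $q_E^{1/p}$; as $p \mid v_\mfq(q_E)$, this class is trivial, so $\bar{\rho}_{E,p}$ is unramified at $\mfq$. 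Case 2: $\mfq \mid p$. Hypothesis (3) gives semistability at $\mfq$ and $p \mid v_\mfq(\Delta_\mfq)$, and hypothesis (1) gives the tame ramification bound $e(\mfq/p) < p-1$. Combining Tate uniformization with Raynaud's classification of finite flat group schemes over a base with $e < p-1$, one deduces that $E[p]$ extends to a finite flat group scheme over $\mcO_{K_\mfq}$; equivalently, $\bar{\rho}_{E,p}|_{G_{K_\mfq}}$ is finite flat.

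With these local properties established, I would apply the Jarvis--Rajaei level-lowering theorem for Hilbert modular forms: given a newform $g$ of level $\mfn$ whose associated mod $p$ representation $\bar{\rho}_{g,\omega_0}$ is irreducible and ``good'' (unramified away from $p$, finite flat at primes above $p$) at every prime $\mfq \mid \mfm_p$, there exists a newform $f$ of parallel weight $2$, level $\mfn/\mfm_p = \mfn_p$, and a prime $\omega \mid p$ of $\Q_f$ such that $\bar{\rho}_{g,\omega_0} \sim \bar{\rho}_{f,\omega}$. Irreducibility of $\bar{\rho}_{E,p}$ is hypothesis (2); the non-cyclotomic condition $\Q(\zeta_p)^+ \nsubseteq K$ of hypothesis (1) rules out the exceptional cases where the Jarvis--Rajaei argument could fail (these arise from $\bar{\rho}_{E,p}$ becoming reducible after restriction to $G_{K(\zeta_p)}$). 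Chaining the two isomorphisms yields $\bar{\rho}_{E,p} \sim \bar{\rho}_{f,\omega}$, as required.

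The main obstacle is the case $\mfq \mid p$: establishing the finite-flat property of $E[p]$ at $\mfq$ requires simultaneously controlling wild ramification (via $e(\mfq/p) < p-1$), the type of reduction (via semistability), and the valuation of $q_E$ (via $p \mid v_\mfq(\Delta_\mfq)$), and this is precisely where the three bundled hypotheses conspire. Everything else — the Tate-curve computation at $\mfq \nmid p$ and the quotation of modularity and Jarvis--Rajaei — is comparatively routine, but neither step can be skipped, since the conclusion names a newform of the specific level $\mfn_p$.
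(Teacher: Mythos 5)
This theorem is not proved in the paper: it is quoted verbatim as \cite[Theorem 7]{FS15}, so there is no in-text argument to compare against. Your sketch is, however, a faithful reconstruction of the proof in that reference: modularity of $E$ supplies a parallel-weight-$2$ newform $g$ of level $\mfn$ with $\bar{\rho}_{E,p}\sim\bar{\rho}_{g,\omega_0}$; the Tate-curve computation shows $\bar{\rho}_{E,p}$ is unramified at $\mfq\mid\mfm_p$ with $\mfq\nmid p$ and finite flat at $\mfq\mid p$ (using semistability, $p\mid v_\mfq(\Delta_\mfq)$, and $e(\mfq/p)<p-1$); and the level-lowering theorems of Fujiwara, Jarvis, and Rajaei then strip $\mfm_p$ from the level, with condition (1) guaranteeing absolute irreducibility of $\bar{\rho}_{E,p}|_{G_{K(\zeta_p)}}$ as those theorems require. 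One small correction: the newform $g$ is furnished by the \emph{definition} of ``$E/K$ is modular,'' not by Conjecture~\ref{ES conj}, which is the converse direction (from newforms to elliptic curves) and plays no role in this theorem — it is only needed later, in Theorem~\ref{auxilary result x^2=By^p+Cz^p over W_K}, to convert the resulting $f$ back into an elliptic curve $E_f$.
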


		\subsection{Our strategy:}
In this section, we discuss the modular method, due to Freitas and Siksek~\cite{FS15}, to prove Theorems~\ref{main result1 for x^2=By^p+Cz^p Type I},~\ref{main result1 for Ax^p+By^p=qz^3 over K}. The proof of Theorem~\ref{main result1 for x^2=By^p+Cz^p Type I} is fairly standard. The novelty of our work can be seen in the proof of Theorem~\ref{main result1 for Ax^p+By^p=qz^3 over K}, which is elaborated below.
\begin{enumerate}
	\item For any non-trivial primitive solution $(a, b, c)\in \mcO_{K}^3$ to the equation $Ax^p+By^p=Cz^3$, we attach a Frey elliptic curve $E:=E_{a,b,c}$ as in \eqref{Frey curve for x^2=By^p+Cz^p of Type I}. Then we prove the modularity of $E$ for $p \gg 0$, and that $E$ has semi-stable reduction at $\mfq$ away from $ S_K^{\prime}:= \{ \mfP \in P :\ \mfP|3ABC \}$ along with $p | v_\mfq(\Delta_E)$. Using~\cite[Theorem 2]{FS15 Irred}, the residual representation $\bar{\rho}_{E,p}$ is irreducible for $p \gg 0$.
	
	\item Let $ (a,b,c)\in \mcO_{K}^3 \setminus S$ (cf. \S\ref{section for main result over K} for the definition of $S$). By a result of Freitas and Siksek on the image of inertia at $\mfq \in P$ (cf. Lemma~\ref{criteria for potentially multiplicative reduction}) and a result of \cite{K90} (cf. Lemma~\ref{image of inertia comparision}) for $p \gg0$, we get that either $p | \#\bar{\rho}_{E,p}(I_\mfP)$ or $ \#\bar{\rho}_{E,p}(I_\mfP) \in \{3,6\}$ for $\mfP \in S_K:= \{ \mfP \in P :\ \mfP|3\}$ if $3$ is inert in $K$ and with some assumptions on $A,B,C$ (cf. Lemma~\ref{reduction on T and S x^2=By^p+Cz^p Type II over K}).
	
	\item Now, using the modularity of $E$, irreducibility of $\bar{\rho}_{E,p}$, semi-stable reduction of $E$ away from $S_K^{\prime}$ and a level-lowering result by ~\cite[Theorem 7]{FS15}, there exists an elliptic curve $E'/K$ having a non-trivial $3$-torsion point with $\bar{\rho}_{E,p} \sim \bar{\rho}_{E^\prime,p}$ for $p\gg 0$ (cf. Theorem~\ref{auxilary result x^2=By^p+2^rz^p over K}). 
	Again, using Lemmas~\ref{criteria for potentially multiplicative reduction}, ~\ref{image of inertia comparision}, we get either $v_\mfP(j_{E^\prime})<0$ or $v_\mfP(j_{E^\prime}) \equiv 0 \text{ or } 2 \pmod 3$ for $\mfP \in S_K$. Finally, using a technique of Mocanu in \cite{M22}, we relate $j_{E'}$ in terms of solutions of~\eqref{keyequation}, together with~\eqref{assumption for main result1 x^2=By^p+2^rz^p over K}, to get $v_\mfP(j_{E^\prime}) \geq0$ and  $v_\mfP(j_{E^\prime}) \equiv 1 \pmod 3$ for some $\mfP \in S_K$ to get a contradiction.
\end{enumerate}

	\section{Solutions of the equation $Ax^p+By^p=Cz^3$ over $W_K$}
	\label{notations section for Ax^p+By^p=Cz^3} 	
	In this section, we study the solutions of the following equation:
	\begin{equation}
		\label{Ax^p+By^p=Cz^3}
		Ax^p+By^p=Cz^3
	\end{equation} 
	with prime exponent $p\geq 3$ and $A,B,C \in \mcO_K\setminus \{0\}$.
	Recall that $P=\mrm{Spec}(\mcO_K)$. Let $S_K:= \{ \mfP \in P :\ \mfP|3\}$ and $S_K^{\prime}:= \{ \mfP \in P :\ \mfP|3ABC \}$.

	\begin{dfn}[Trivial solution]
		We say a solution $(a, b, c)\in \mcO_K^3$ to the equation \eqref{Ax^p+By^p=Cz^3} with exponent $p$ is trivial, if $abc=0$,
		otherwise non-trivial.
		We say $(a, b, c)\in \mcO_K^3$ is primitive if $a\mcO_K+b\mcO_K+c\mcO_K=\mcO_K$.
	\end{dfn}  
\begin{dfn}
\label{def for W_K}
Let $W_K$ be the set of all non-trivial primitive solutions $(a, b, c)\in  \mcO_K^3$ to the equation~\eqref{Ax^p+By^p=Cz^3} with exponent $p$ such that $\mfP |ab$ for every $\mfP \in S_K$.
\end{dfn}
	\begin{remark}
		\label{remark for W_K and W_K'}
		Let $\mfP \in S_K$. If $(a, b, c)\in W_K$ with $p > v_\mfP (C)$, then $\mfP$ divides exactly one of $a, b$. Otherwise $\mfP^p | Aa^p+Bb^p=Cc^3$ and since  $p >  v_\mfP (C)$, we get $\mfP |c$. This is a contradiction to the primitivity of $(a, b, c)$.
	\end{remark}

	\subsection{Main result}
	\label{section for main result of x^2=By^p+2^rz^p} 
	For any set $S \subseteq P$, let $\mcO_{S}:=\{\alpha \in K : v_\mfP(\alpha)\geq 0 \text{ for all } \mfP \in P \setminus S\}$ be the ring of $S$-integers in $K$ and $\mcO_{S}^*$ be the $S$-units of $\mcO_{S}$.  
    Let $\Cl_S(K):= \mrm{Cl}(K)/\langle [\mfP]\rangle_{\mfP\in S}$ and $\Cl_S(K)[n]$ be its $n$-torsion points, where $\mrm{Cl}(K)$ denotes the class group of $K$.
\begin{dfn}
	\label{asymptotic solution}
	We say a Diophantine equation $Ax^p+By^p=Cz^3$ with exponent $p$ has no asymptotic solution in a set $W \subseteq \mcO_K^3$, if there exists a constant $V_{K,A,B,C}>0$ (depending on $K,A,B,C$) such that for primes $p>V_{K,A,B,C}$, the equation $Ax^p+By^p=Cz^3$ with exponent $p$ has no non-trivial primitive solution in $W$.
\end{dfn}
    We now show that the equation~\eqref{Ax^p+By^p=Cz^3} with exponent $p$ has no asymptotic solution in $W_K$. More precisely,
	\begin{thm}
		\label{main result1 for x^2=By^p+Cz^p Type I}
		\label{main result1 for x^2=By^p+Cz^p Type II}
		Let $K$ be a totally real field with $\Cl_{S_K^{\prime}}(K)[3]=1$. Suppose for every solution $(\alpha, \beta, \gamma) \in \mcO_{S_K^{\prime}}^\ast \times \mcO_{S_K^{\prime}}^\ast \times \mcO_{S_K^{\prime}}$ to  $\alpha+\beta=\gamma^3$,
        there exists $\mfP \in S_K$ that satisfies 
		\begin{equation}
			\label{assumption for main result1 for x^2=By^p+Cz^p Type I}
	\left| v_\mfP \left(\alpha \beta^{-1}\right) \right| \leq 3v_\mfP(3).
		\end{equation}
		Then, the equation $Ax^p+By^p=Cz^3$ with exponent $p$ has no asymptotic solution in $W_K$.
\end{thm}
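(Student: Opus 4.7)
\emph{Proof proposal.} I would follow the standard modular method tailored to signature $(p,p,3)$. Fix a prime $p$ large relative to $K, A, B, C$, and suppose for contradiction that $(a,b,c) \in W_K$ is a non-trivial primitive solution of~\eqref{Ax^p+By^p=Cz^3}. First, I would attach a Frey elliptic curve $E := E_{a,b,c}/K$ of the shape used in~\cite{BVY04} and~\cite{M22}, which carries a $K$-rational $3$-isogeny, has multiplicative reduction at every $\mfq \notin S_K^{\prime}$, and satisfies $p \mid v_\mfq(\Delta_E)$ at such $\mfq$. The definition of $W_K$ together with Remark~\ref{remark for W_K and W_K'} guarantees that for each $\mfP \in S_K$ exactly one of $a, b$ is divisible by $\mfP$ (once $p > v_\mfP(C)$), so $E$ is potentially multiplicative at every $\mfP \in S_K$ with $v_\mfP(\Delta_E)$ a linear function of $p$.

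Next, the modularity of $E/K$ for $p \gg 0$ together with the irreducibility of $\bar\rho_{E,p}$ for $p \gg 0$ (Freitas--Siksek) verify the hypotheses of Theorem~\ref{level lowering of mod $p$ repr}; the condition $\Q(\zeta_p)^+ \nsubseteq K$ is automatic for large $p$ since $K$ is fixed, and the ramification condition on primes above $p$ depends only on $K$. Level lowering then yields a Hilbert modular newform $f$ over $K$ of parallel weight $2$, level $\mfn_p$ supported in $S_K^{\prime}$, and a prime $\omega \mid p$ of $\Q_f$ with $\bar\rho_{E,p} \sim \bar\rho_{f,\omega}$. Invoking Conjecture~\ref{ES conj} (valid here by Darmon~\cite{D04} in the relevant cases, or via the Freitas--Siksek partial answer), I obtain an elliptic curve $E^\prime/K$ with conductor dividing $\mfn_p$ such that $\bar\rho_{E,p} \sim \bar\rho_{E^\prime,p}$. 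The hypothesis $\Cl_{S_K^{\prime}}(K)[3] = 1$ furnishes $E^\prime$ with a non-trivial $K$-rational $3$-torsion point: the Galois-stable order-$3$ subgroup of $E^\prime[3]$ inherited (via the $3$-isogeny on $E$) is cut out by a cubic character of $G_K$ unramified outside $S_K^{\prime}$, which must be trivial under the class-group assumption.

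The last step, following Mocanu~\cite{M22}, is to encode the pair $(E^\prime, P)$ with $P \in E^\prime(K)[3]\setminus\{0\}$ as a $K$-rational point on $Y_1(3)$, parametrized by a triple $(\alpha, \beta, \gamma) \in \mcO_{S_K^{\prime}}^\ast \times \mcO_{S_K^{\prime}}^\ast \times \mcO_{S_K^{\prime}}$ with $\alpha + \beta = \gamma^3$, and with $j_{E^\prime}$ an explicit rational function in $\alpha, \beta, \gamma$. Restricting $\bar\rho_{E,p} \sim \bar\rho_{E^\prime,p}$ to an inertia subgroup $I_\mfP$ at $\mfP \in S_K$, and applying Tate uniformization of $E$ at $\mfP$ (where the Tate parameter has $\mfP$-adic valuation divisible by $p$) alongside the model of $E^\prime$ read off from $(\alpha,\beta,\gamma)$, forces $v_\mfP(\alpha \beta^{-1})$ to be a nonzero multiple of $p$ for every $\mfP \in S_K$. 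This is incompatible with~\eqref{assumption for main result1 for x^2=By^p+Cz^p Type I} once $p > 3\max_{\mfP \in S_K} v_\mfP(3)$, yielding the required contradiction.

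The main obstacle I anticipate is the precise bookkeeping in the last paragraph: making rigorous the link between the Tate-parameter valuation of $E$ at $\mfP \in S_K$ (of order $p\cdot|v_\mfP(a) - v_\mfP(b)|$ up to bounded error) and the valuation of $\alpha \beta^{-1}$ on the $E^\prime$ side requires tracking how the $3$-isogeny character survives level lowering and pinning down the correct twist of $E^\prime$ compatible with the $Y_1(3)$ parametrization. The hypothesis $\Cl_{S_K^{\prime}}(K)[3] = 1$ is precisely what makes that twist unambiguous, so the numerical contradiction with~\eqref{assumption for main result1 for x^2=By^p+Cz^p Type I} is clean.
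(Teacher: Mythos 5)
Your overall architecture --- Frey curve, modularity and irreducibility for $p\gg 0$, level lowering to an elliptic curve $E'/K$ with a $3$-torsion point and good reduction outside $S_K^{\prime}$, then a contradiction extracted from the unit equation $\alpha+\beta=\gamma^3$ at a prime $\mfP\in S_K$ --- is the same as the paper's. But the endgame has two genuine gaps. First, you spend the hypothesis $\Cl_{S_K^{\prime}}(K)[3]=1$ on producing the $3$-torsion point of $E'$, claiming it kills a cubic character of $G_K$ unramified outside $S_K^{\prime}$. It does not: $\Cl_{S_K^{\prime}}(K)$ is a quotient of $\Cl(K)$, and its triviality says nothing about cubic characters ramified inside $S_K^{\prime}$ (nor even about all everywhere-unramified ones). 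In the paper the $3$-torsion point comes from the standard Mazur-style trace comparison (as in \cite[p.~1247]{M22}), using that the Frey curve \eqref{Frey curve for x^2=By^p+Cz^p of Type I} already has the rational $3$-torsion point $(0,0)$, at the cost of enlarging the constant $V$. The class-group hypothesis is needed later, and you omit it there: writing $E'$ as $y^2+a'xy+b'y=x^3$ and setting $\lambda=a'^3/b'$, $\mu=\lambda-27$, one only knows $\lambda\mcO_K=I^3J$ with $J$ an $S_K^{\prime}$-ideal; it is exactly $\Cl_{S_K^{\prime}}(K)[3]=1$ that lets one rescale $\lambda=u\gamma^3$ with $u\in\mcO_{S_K^{\prime}}^\ast$ and thereby produce a bona fide triple $(\alpha,\beta,\gamma)=(\mu/u,\,27/u,\,\gamma)$ to which \eqref{assumption for main result1 for x^2=By^p+Cz^p Type I} applies. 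Without this step the unit equation never enters the argument.

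Second, your final contradiction rests on the claim that comparing Tate parameters forces $v_\mfP(\alpha\beta^{-1})$ to be a nonzero multiple of $p$. That is false: $E'$ has conductor supported on $S_K^{\prime}$ with bounded exponents, so it lies in a finite set independent of $p$, and $v_\mfP(j_{E'})$ is bounded. What the inertia comparison actually yields (Lemmas~\ref{criteria for potentially multiplicative reduction} and~\ref{reduction on T and S}) is $p\mid\#\bar{\rho}_{E,p}(I_\mfP)=\#\bar{\rho}_{E',p}(I_\mfP)$, hence only that $v_\mfP(j_{E'})<0$ for every $\mfP\in S_K$. (Note also that at $\mfP\in S_K$ the Tate parameter of $E$ has valuation \emph{not} divisible by $p$ --- that is precisely the content of Lemma~\ref{reduction on T and S}; divisibility by $p$ holds at the primes outside $S_K^{\prime}$.) The paper then closes with the converse computation: from $j_{E'}=(\mu+27)(\mu+3)^3/\mu$ with $\mu=27\,\alpha\beta^{-1}$, the bound $\left|v_\mfP(\alpha\beta^{-1})\right|\le 3v_\mfP(3)$ at the $\mfP$ supplied by \eqref{assumption for main result1 for x^2=By^p+Cz^p Type I} forces $0\le v_\mfP(\mu)\le 6v_\mfP(3)$, and a short case analysis gives $v_\mfP(j_{E'})\ge 0$, contradicting $v_\mfP(j_{E'})<0$. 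You would need to replace your Tate-parameter step by this $j$-invariant computation for the proof to close.
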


\begin{remark}
By~\cite[Theorem 39]{M22}, for any finite set $S \subseteq P$, the equation $\alpha+\beta=\gamma^3$ with $\alpha, \beta \in \mcO_{S}^\ast$ and $\gamma \in \mcO_{S}$ has only finitely many solutions up to the equivalence $\sim$, where $\sim$ is defined as follows: $(\alpha, \beta, \gamma) \sim (\alpha', \beta', \gamma')$ if there exists $\epsilon \in \mcO_{S}^\ast$ such that $\alpha= \epsilon^3 \alpha'$, $\beta= \epsilon^3 \beta'$ and $\gamma= \epsilon \gamma'$. So the hypothesis \eqref{assumption for main result1 for x^2=By^p+Cz^p Type I} in Theorem~\ref{main result1 for x^2=By^p+Cz^p Type I} needs to be checked only finitely many times.
\end{remark}
 For any number field $F$, let $h_F$ denote the class number of $F$. We say that $S \subseteq P$ is principal if $\mfP$ is principal for all $\mfP \in S$. If $3$ is inert in $K$, and either $A,B,C  \in \mcO_K^\ast$ or $A,B,C \in \mathbb{P}$ are inert in $K$, then $S_K^{\prime}$ is principal. In this case, we have $\Cl_{S^\prime_K}(K)= \Cl(K)$, and $\Cl(K)[3]=1$ is equivalent to $3 \nmid h_K$. Let $\zeta_3$ be a primitive cubic root of unity. The following proposition is a consequence of Theorem~\ref{main result1 for x^2=By^p+Cz^p Type I}, which will be useful in \S\ref{section for loc criteria}.
\begin{prop}
	\label{main result2 for x^2=By^p+Cz^p Type I}
	\label{main result2 for x^2=By^p+Cz^p Type II}
	Let $K$ be a totally real field such that $S_K^{\prime}=S_K$ is principal and $3 \nmid  h_K h_{K(\zeta_3)}$. Assume that $3$ is inert or totally ramified in $K$,
	and $S_K= \{\mfP\}$. Suppose for every solution $(\alpha, \gamma) \in \mcO_{S_K}^\ast \times \mcO_{S_K}$ to  $\alpha+1=\gamma^3$ with $v_\mfP(\alpha) \geq 0$ satisfies the inequality
	\begin{equation}
	\label{assumption for main result2 for x^2=By^p+Cz^p Type I}
v_\mfP(\alpha)\leq 3v_\mfP(3).
	\end{equation} 
  	Then, the equation $Ax^p+By^p=Cz^3$ with exponent $p$ has no asymptotic solution in $W_K$.
\end{prop}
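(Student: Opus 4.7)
The plan is to deduce Proposition~\ref{main result2 for x^2=By^p+Cz^p Type I} from Theorem~\ref{main result1 for x^2=By^p+Cz^p Type I} by verifying its two hypotheses in the present setting. The first hypothesis, $\Cl_{S_K^\prime}(K)[3]=1$, is immediate: since $S_K^\prime = S_K$ is principal, one has $\Cl_{S_K^\prime}(K) \simeq \Cl(K)$, and $3 \nmid h_K$ gives $\Cl(K)[3]=1$. The bulk of the argument is therefore devoted to the second hypothesis: every solution $(\alpha,\beta,\gamma) \in \mcO_{S_K}^\ast \times \mcO_{S_K}^\ast \times \mcO_{S_K}$ to $\alpha+\beta=\gamma^3$ must satisfy $|v_\mfP(\alpha\beta^{-1})| \le 3v_\mfP(3)$ at the unique prime $\mfP \in S_K$.

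By the symmetry of $\alpha$ and $\beta$ in the equation, I may assume $v_\mfP(\alpha) \ge v_\mfP(\beta)$ and argue by contradiction, supposing $v_\mfP(\alpha/\beta) > 3v_\mfP(3)$. Then $v_\mfP(\alpha) > v_\mfP(\beta)$ forces $v_\mfP(\gamma^3)=v_\mfP(\beta)$, so $3 \mid v_\mfP(\beta)$; moreover, $y:=\gamma^3/\beta = 1 + \alpha/\beta$ satisfies $v_\mfP(y-1) = v_\mfP(\alpha/\beta) > 3v_\mfP(3) > 2v_\mfP(3)$. Hensel's lemma, applied to $f(X)=X^3-y$ at $X=1$ over the completion $K_\mfP$ (the condition $v_\mfP(f(1)) > 2v_\mfP(f'(1)) = 2v_\mfP(3)$ is met), produces a cube root of $y$ in $K_\mfP^\ast$; hence $\beta = \gamma^3/y$ is itself a cube in $K_\mfP^\ast$.

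The key step is to globalize this local statement. Set $L := K(\zeta_3,\beta^{1/3})$, a Kummer extension of $K(\zeta_3)$. At finite primes of $K(\zeta_3)$ lying above primes of $K$ outside $S_K$, $\beta$ is a unit and the residue characteristic is different from $3$, so tame Kummer theory shows $L/K(\zeta_3)$ is unramified there. At primes of $K(\zeta_3)$ above $\mfP$, the local-cube property just established forces $X^3-\beta$ to split completely over the completion (which contains $K_\mfP$ and $\zeta_3$), so $L/K(\zeta_3)$ is unramified (indeed totally split) there as well. Finally, $K(\zeta_3)$ is totally complex (as $K$ is totally real and $\zeta_3 \notin \R$), so there is no infinite-place ramification to check. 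Thus $L/K(\zeta_3)$ is an everywhere-unramified abelian extension of degree dividing $3$, and $3 \nmid h_{K(\zeta_3)}$ together with class field theory forces $L=K(\zeta_3)$, i.e.\ $\beta^{1/3} \in K(\zeta_3)$. Since $[K(\beta^{1/3}):K] \in \{1,3\}$ must then divide $[K(\zeta_3):K]=2$, it equals $1$, so $\beta=\delta^3$ for some $\delta \in K^\ast$; a routine valuation check ($v_\mfq(\delta)=v_\mfq(\beta)/3=0$ for $\mfq \notin S_K$) gives $\delta \in \mcO_{S_K}^\ast$.

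I now apply the equivalence $\sim$ recalled in the remark after Theorem~\ref{main result1 for x^2=By^p+Cz^p Type I}, with $\epsilon=\delta$. The solution $(\alpha,\beta,\gamma)$ is equivalent to $(\alpha\delta^{-3},\,1,\,\gamma\delta^{-1})$, which is a solution of $\alpha'+1=\gamma'^3$ in $\mcO_{S_K}^\ast \times \mcO_{S_K}$ with $v_\mfP(\alpha')=v_\mfP(\alpha/\beta) > 3v_\mfP(3) \ge 0$. This contradicts the hypothesis \eqref{assumption for main result2 for x^2=By^p+Cz^p Type I}, completing the proof. The main obstacle is the Kummer-theoretic local-to-global step, where both the totally-real hypothesis on $K$ (making $K(\zeta_3)$ totally complex, so infinite-place ramification is automatic) and the class-number condition $3 \nmid h_{K(\zeta_3)}$ play indispensable roles.
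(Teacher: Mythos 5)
Your proof is correct, and its skeleton matches the paper's: reduce to Theorem~\ref{main result1 for x^2=By^p+Cz^p Type I}, and show that a solution violating the bound would force $K(\zeta_3,\beta^{1/3})$ to be a degree-$3$ everywhere-unramified extension of $K(\zeta_3)$, contradicting $3\nmid h_{K(\zeta_3)}$. The genuine difference is in how unramifiedness above $3$ is established. The paper first uses principality of $\mfP$ to normalize $v_\mfP(\beta)\in\{0,1,2\}$, splits into cases according to whether $\beta$ is a global cube, and in the non-cube case exhibits the explicit integral element $\theta=\bigl(\gamma^2+\gamma\zeta_3\beta^{1/3}+\zeta_3^2\beta^{2/3}\bigr)/3$, whose minimal polynomial lies in $\mcO_K[x]$ and has discriminant $\equiv -4\gamma^6\pmod 3$, a unit, so the extension is unramified at $3$. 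You instead apply Hensel's lemma to $X^3-\gamma^3\beta^{-1}$ at $X=1$ --- the assumed inequality $v_\mfP(\alpha\beta^{-1})>3v_\mfP(3)>2v_\mfP(3)$ is precisely the Hensel condition --- to conclude that $\beta$ is a cube in $K_\mfP^\ast$, so that $L/K(\zeta_3)$ splits locally above $3$. This is cleaner and more conceptual: it removes the need for the normalization of $v_\mfP(\beta)$ (and hence for the principality of $\mfP$), it absorbs the paper's case distinction on whether $\beta$ is a cube by \emph{proving} that it must be one, and you are more careful than the paper in also recording unramifiedness away from $3$ and at the infinite places, which is needed before invoking $3\nmid h_{K(\zeta_3)}$. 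Both arguments then conclude identically by rescaling to a solution of $\alpha'+1=\gamma'^3$ and invoking \eqref{assumption for main result2 for x^2=By^p+Cz^p Type I}.
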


	\subsection{Construction of Frey elliptic curves}
	For any non-trivial and primitive solution $(a, b, c)\in \mcO_K^3$ to the equation \eqref{Ax^p+By^p=Cz^3} with exponent $p$, the Frey curve $E:=E_{a,b,c}$ is given by
		\begin{equation}
			\label{Frey curve for x^2=By^p+Cz^p of Type I}
			E:=E_{a,b,c} : Y^2+3CcXY+C^2Bb^pY = X^3,
		\end{equation}
    	with $c_4=3^2C^3c(9Aa^p+Bb^p),\ \Delta_E=3^3AB^3C^8(ab^3)^p$ and $ j_E=3^{3} \frac{Cc^3(9Aa^p+Bb^p)^3}{AB^3(ab^3)^p}$, where $j_E$ (resp., $\Delta_E$) denote the $j$-invariant (resp., discriminant) of $E$. 
\subsection{Modularity of Frey elliptic curves}
    	We now use a modularity result of Freitas, Le Hung and Siksek (cf.~\cite[Theorem 5]{FLHS15}) to prove the modularity of the Frey curve $E:=E_{a,b,c}$ in ~\eqref{Frey curve for x^2=By^p+Cz^p of Type I} associated to $(a,b,c)\in W_K$ for primes $p \gg 0$.
	\begin{thm}
		\label{modularity of Frey curve x^2=By^p+Cz^p over W_K}
		Let K be a totally real number field. Then, there exists a constant $D:=D_{K,A,B,C}$ (depending on $K,A,B,C$) such that
		for any solution $(a,b,c)\in W_K$ to the equation \eqref{Ax^p+By^p=Cz^3}  with exponent $p>D$, the Frey curve $E:=E_{a,b,c}$ given in~\eqref{Frey curve for x^2=By^p+Cz^p of Type I} is modular.  
	\end{thm}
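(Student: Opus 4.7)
The plan is to combine \cite[Theorem 5]{FLHS15}, which leaves at most finitely many potentially non-modular $j$-invariants in $K$, with an asymptotic analysis of $v_\mfP(j_{E_{a,b,c}})$ at primes $\mfP \in S_K$ that exploits the hypothesis $(a,b,c) \in W_K$.

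First I would invoke \cite[Theorem 5]{FLHS15} to fix a finite set $J_K \subset K$ such that every elliptic curve $E'/K$ with $j(E') \notin J_K$ is modular. Suppose for contradiction that the theorem fails: for arbitrarily large primes $p$ there exists $(a,b,c) \in W_K$ solving \eqref{Ax^p+By^p=Cz^3} for which $E_{a,b,c}$ is not modular. Then $j(E_{a,b,c}) \in J_K$ for each such triple, so since $J_K$ is finite, pigeonhole produces a single $j_0 \in J_K$ together with a sequence of primes $p_i \to \infty$ and triples $(a_i,b_i,c_i) \in W_K$ satisfying $j(E_{a_i,b_i,c_i}) = j_0$ for all $i$.

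The key step is to rule out the existence of such a stable $j_0$. Fix any $\mfP \in S_K$. By Remark~\ref{remark for W_K and W_K'}, for $p_i > v_\mfP(C)$ exactly one of $a_i,b_i$ is $\mfP$-divisible. Using the explicit formula for $j_E$, the ultrametric inequality applied to $9Aa^p+Bb^p$, and the relation $Aa^p+Bb^p=Cc^3$ (which, once $p$ is large, pins down $v_\mfP(c)$ as a fixed function of $v_\mfP(A),v_\mfP(B),v_\mfP(C)$; any arithmetic incompatibility here kills the case outright), I would compute that
$$ v_\mfP\bigl(j_{E_{a_i,b_i,c_i}}\bigr) \;=\; O_{K,A,B,C}(1) \;-\; m_\mfP \cdot p_i, $$
where $m_\mfP$ is a positive integer (explicitly $v_\mfP(a_i)$ or $3 v_\mfP(b_i)$ according to the case, hence $\geq 1$). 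Thus $v_\mfP(j_{E_{a_i,b_i,c_i}}) \to -\infty$, which contradicts $j(E_{a_i,b_i,c_i})=j_0 \in K$ whenever $j_0 \neq 0$. The residual case $j_0=0$ forces $9Aa_i^{p_i}+Bb_i^{p_i}=0$; valuating at $\mfP$ then yields an equality with a term linear in $p_i$ on one side and a fixed integer on the other, impossible for $p_i$ large.

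Taking $D=D_{K,A,B,C}$ large enough to dominate $\max_\mfP v_\mfP(C)$, the stabilization thresholds in the min-valuation estimates, and $\max\{|v_\mfP(j_0)| : j_0 \in J_K \setminus \{0\},\ \mfP \in S_K\}$, all of which depend only on $(K,A,B,C)$, completes the argument. The only delicate point I anticipate is bookkeeping: verifying that $D$ depends solely on $(K,A,B,C)$ and not on the individual triples $(a_i,b_i,c_i)$. Since $J_K$ is intrinsic to $K$ and the valuation estimates use only the fixed quantities $A,B,C$ together with the $W_K$ hypothesis, this separation is routine but worth spelling out.
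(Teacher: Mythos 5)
Your proof is correct, but it takes a genuinely different route from the paper's. Both arguments start from \cite[Theorem 5]{FLHS15}, but the paper then works with the invariant $\mu(E)=Bb^p/(Aa^p)$: writing $j_E$ as a degree-$4$ rational function of $\mu(E)$, each non-modular $j$-invariant $j_i$ yields at most four exceptional values $\mu_k$, and $\mu(E)=\mu_k$ forces $(b/a)^p=A\mu_k/B$, which determines $p$ uniquely because $b/a$ cannot be a root of unity ($K$ is totally real and $b\neq\pm a$ by Remark~\ref{remark for W_K and W_K'}). You instead show that for $\mfP\in S_K$ the valuation $v_\mfP(j_E)$ is bounded above by a constant minus $p$, hence escapes the finite set of non-modular $j$-invariants; this is essentially the computation the paper records separately as Lemma~\ref{reduction on T and S}, so your route recycles work that is needed anyway, and your handling of the degenerate case $9Aa^p+Bb^p=0$ is sound. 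The trade-off: your valuation argument leans on the $W_K$ hypothesis that $\mfP\mid ab$ for $\mfP\in S_K$, so it would not transfer to the paper's second modularity statement (Theorem~\ref{modularity of Frey curve of x^2=By^p+2^rz^p over K}, over $\mcO_K^3\setminus S$), where $\mfP\nmid ab$ is possible and $j_E$ can be $\mfP$-integral; the paper's $\mu$-based argument covers both settings uniformly. Minor remarks: the pigeonhole extraction of a single $j_0$ is unnecessary (it suffices that $v_\mfP(j_E)$ eventually drops below $\min\{v_\mfP(j):j\in J_K\setminus\{0\}\}$), and both proofs produce a $D$ that is effective only insofar as the finite list from \cite{FLHS15} is.
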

	
	\begin{proof}
    By~\cite[Theorem 5]{FLHS15}, there exist only finitely many elliptic curves  over $K$, up to $\bar{K}$-isomorphism, which are not modular. Let $j_1,\ldots,j_s \in K$ be the $j$-invariants of those elliptic curves.
		The $j$-invariant of the Frey curve $E$ is given by $ j_E=3^{3} \frac{Cc^3(9Aa^p+Bb^p)^3}{AB^3(ab^3)^p}=3^{3} \frac{(Aa^p+Bb^p)(9Aa^p+Bb^p)^3}{AB^3(ab^3)^p}=3^{3}\frac{(1+\mu(E))(9+\mu(E))^3}{\mu(E)^3}$ for $\mu(E)= \frac{Bb^p}{Aa^p}$.
		For each $i=1,2,\ldots,s$, the equation $j_E=j_i$ has at most four solutions in $K$. So, there exists 
		$\mu_1, \mu_2, ..., \mu_n \in K$ with $n\leq 4s$ such that $E$ is modular for all $\mu(E) \notin\{\mu_1, \mu_2, ..., \mu_n\}$.
		If $\mu(E)= \mu_k$ for some $k \in \{1, 2, \ldots, n \}$, then $\left(\frac{b}{a} \right)^p=\frac{A\mu_k}{B}$.
		This equation determines $p$ uniquely, denoting it $p_k$. Suppose $q \neq l$ are primes such that $\left(\frac{b}{a} \right)^q=\left(\frac{b}{a} \right)^l$, which means $\left(\frac{b}{a}\right)$ is a root of unity. Since $K$ is totally real, we get $b=\pm a$. By Remark~\ref{remark for W_K and W_K'}, this cannot happen if we choose $p>  v_\mfP (C)$ for $\mfP \in S_K$.
	    Now, the proof of the theorem follows by taking $D=\max \big\{p_1,...,p_m,  v_\mfP (C) \big\}$.
	\end{proof}

	\subsection{Reduction type of Frey elliptic curves}
	The following lemma characterizes the type of reduction of the Frey curve $E:= E_{a,b,c}$ 
	at primes $\mfq$ away from $S_K^{\prime}$.
	\begin{lem}
		\label{reduction away from S}
		Let $(a,b,c) \in \mcO_K^3$ be a non-trivial primitive solution to the equation~\eqref{Ax^p+By^p=Cz^3} with exponent $p$, and let $E$ be the associated Frey curve. Then, at all primes $\mfq$ away from $ S_K^{\prime}$, $E$ is minimal, semi-stable at $\mfq$ and satisfies $p | v_\mfq(\Delta_E)$. Let $\mfn$ be the conductor of $E$ and $\mfn_p$ be as in \eqref{conductor of elliptic curve}. Then,
			\begin{equation}
				\label{conductor of E and E' x^2=By^p+Cz^p Type I}
				\mfn=\prod_{\mfP \in S_K^{\prime}}\mfP^{r_\mfP} \prod_{\mfq|ab,\ \mfq \notin S_K^{\prime}}\mfq,\ \mfn_p=\prod_{\mfP \in S_K^{\prime}}\mfP^{r_\mfP^{\prime}},
			\end{equation}
			where $0 \leq r_\mfP^{\prime} \leq r_\mfP $ with $r_\mfP 
			\leq 2+3v_\mfP(3)$ for $\mfP |3$ and $ r_\mfP\leq 2+6v_\mfP(2)$ for $\mfP \nmid 3$.
\end{lem}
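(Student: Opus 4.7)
The plan is to perform a local analysis of the Frey curve at each prime $\mfq$ of $\mcO_K$ away from $S_K^{\prime}$. First, I would use the primitivity of $(a,b,c)$ together with the equation $Aa^p+Bb^p=Cc^3$ and the hypothesis $\mfq\nmid 3ABC$ to show that $\mfq$ divides at most one of $a,b,c$: if $\mfq$ divided two of them, the equation would force it to divide the third, contradicting primitivity. This reduces the analysis to four mutually exclusive cases, namely $\mfq\nmid abc$, $\mfq\mid a$ only, $\mfq\mid b$ only, and $\mfq\mid c$ only.

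Next, I would read off the valuations of $c_4=3^2C^3c(9Aa^p+Bb^p)$ and $\Delta_E=3^3AB^3C^8(ab^3)^p$ in each case. When $\mfq\nmid ab$ the discriminant is a unit at $\mfq$, so $E$ has good reduction. When $\mfq\mid a$ only (the case $\mfq\mid b$ only is symmetric), one has $\mfq\nmid Bb$ and hence $v_\mfq(9Aa^p+Bb^p)=0$, which yields $v_\mfq(c_4)=0$ and $v_\mfq(\Delta_E)=p\,v_\mfq(a)>0$. Since $v_\mfq(c_4)<4$, the given Weierstrass model is already minimal at $\mfq$, the reduction is multiplicative (hence semi-stable), and $p\mid v_\mfq(\Delta_E)$, as required.

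To compute the conductor $\mfn$, I combine these local data. At primes $\mfq\notin S_K^{\prime}$, the contribution is $\mfq^1$ exactly when $\mfq\mid ab$, which produces the second factor in the formula for $\mfn$. At primes $\mfP\in S_K^{\prime}$, I invoke the standard Brumer--Kramer bounds for conductor exponents over local fields of residue characteristic $2$ and $3$: these give $r_\mfP\leq 2+3v_\mfP(3)$ when $\mfP\mid 3$ and $r_\mfP\leq 2+6v_\mfP(2)$ otherwise, which collapses to $r_\mfP\leq 2$ at primes of residue characteristic exceeding $3$. For $\mfn_p$, observe that every prime $\mfq\notin S_K^{\prime}$ with $\mfq\mid ab$ satisfies $\mfq\,\|\,\mfn$ and $p\mid v_\mfq(\Delta_E)$, so by definition it is absorbed into $\mfm_p$ and removed from $\mfn_p$. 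Thus $\mfn_p$ is supported only on $S_K^{\prime}$ with exponents $0\leq r_\mfP^{\prime}\leq r_\mfP$.

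The calculation is almost entirely routine; the only non-trivial input is the Brumer--Kramer estimate for the conductor exponent at primes of wild residue characteristic, which I would cite rather than reprove. The main care is in executing the divisibility trichotomy cleanly using primitivity so that no spurious cross-case ($\mfq$ dividing two of $a,b,c$ simultaneously) slips through, since every subsequent valuation computation depends on the case split being exhaustive and disjoint.
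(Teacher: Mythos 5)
Your proposal is correct and follows essentially the same route as the paper: a case split via primitivity showing $\mfq$ divides exactly one of $a,b$ whenever $\mfq\mid\Delta_E$, the valuation computation $v_\mfq(c_4)=0$ giving minimality and multiplicative reduction with $p\mid v_\mfq(\Delta_E)$, the resulting shape of $\mfn$ and $\mfn_p$, and a cited conductor-exponent bound at the primes of $S_K^{\prime}$ (the paper uses Silverman, Theorem IV.10.4, which is the same estimate you attribute to Brumer--Kramer). No gaps.
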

	
	\begin{proof}
	Let $\mfq \in P \setminus S_K^\prime$. 
\begin{itemize}
	\item If $\mfq \not|\Delta_E$, then $E$ has good reduction at $\mfq$
	and $p | v_\mfq(\Delta_E)=0$.
	\item If $\mfq|\Delta_E=3^3AB^3C^8(ab^3)^p$, then $\mfq$ exactly divides one of $a$ and $b$, since $(a,b,c)$ is primitive and $\mfq \nmid 3ABC$. This implies $\mfq \nmid c$, hence $\mfq\nmid c_4=3^2C^3c(9Aa^p+Bb^p)$. Therefore, $E$ is minimal, and $E$ has multiplicative reduction at $\mfq$.
\end{itemize}
        Since $v_\mfq(\Delta_E)=p v_\mfq(ab^3) $, $p | v_\mfq(\Delta_E)$. 
        By the definition of $\mfn_p$ in~\eqref{conductor of elliptic curve}, we get $\mfq \nmid \mfn_p$ for all $\mfq \notin S_K^{\prime}$. Finally, for $\mfP \in S_K^{\prime}$, the bounds on $r_\mfP$ follow from \cite[Theorem IV.10.4]{S94}.
	\end{proof}
%

	\subsubsection{Type of reduction with image of inertia}
    Now, we recall ~\cite[Lemma 3.4]{FS15}, which will be useful for the types of reduction of the Frey curve at  $\mfq \in P$. 
	\begin{lem}
		\label{criteria for potentially multiplicative reduction}
		Let $E/K$ be an elliptic curve and $p>5$ be a prime. For $\mfq \in P$ with $\mfq \nmid p$, $E$ has potentially multiplicative reduction at $\mfq$ and $p \nmid v_\mfq(j_E)$ if and only if $p | \# \bar{\rho}_{E,p}(I_\mfq)$.
	\end{lem}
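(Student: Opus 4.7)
My plan is to combine Tate uniformization with the Serre--Tate--Kraus bound on inertia for potentially good reduction, dichotomizing on the two reduction behaviours available at $\mfq \nmid p$: potentially good and potentially multiplicative. By the semistable reduction theorem every elliptic curve falls into one of these two classes after a finite base change, and each class handles one side of the claimed biconditional.

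If $E$ has potentially good reduction at $\mfq$, a result of Serre--Tate (made explicit by Kraus) shows that for $\mfq \nmid p$ the image $\bar{\rho}_{E,p}(I_\mfq)$ is finite of order dividing $24$. Since $p > 5$, this forces $p \nmid \#\bar{\rho}_{E,p}(I_\mfq)$; and as $j_E$ is integral at $\mfq$ in this case, $E$ is not potentially multiplicative either, so both sides of the biconditional vacuously fail. If instead $E$ has potentially multiplicative reduction, a quadratic twist makes it split multiplicative, and since $p > 2$ and twisting preserves $j_E$, neither the $p$-part of $\#\bar{\rho}_{E,p}(I_\mfq)$ nor $v_\mfq(j_E)$ is affected; so I may assume $E$ is split multiplicative at $\mfq$.

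In that case Tate uniformization supplies a Tate parameter $q_E \in K_\mfq^*$ with $v_\mfq(q_E) = -v_\mfq(j_E) > 0$ together with an $I_\mfq$-equivariant identification of $E[p]$ with the $\F_p$-plane spanned by $\zeta_p$ and a fixed $p$-th root $q_E^{1/p}$. Because $\mfq \nmid p$, inertia fixes $\zeta_p$ and sends $q_E^{1/p}$ to $\zeta_p^{b(\sigma)} q_E^{1/p}$ for some $b(\sigma) \in \F_p$, so in this basis $\bar{\rho}_{E,p}(I_\mfq)$ lies in the upper unipotent subgroup $\bigl\{\psmat{1}{\ast}{0}{1}\bigr\}$ and therefore has order $1$ or $p$. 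Kummer theory, using that $p$ is invertible in the residue field at $\mfq$, shows $q_E$ admits a $p$-th root in $K_\mfq^{\mrm{ur}}$ if and only if $p \mid v_\mfq(q_E)$; hence the image has order $p$ exactly when $p \nmid v_\mfq(j_E)$, yielding the stated equivalence. The one delicate point is the descent from potentially multiplicative to split multiplicative via twisting, but since $p$ is odd an index-two change cannot alter the $p$-part of a group order, so this reduction is harmless and the argument goes through cleanly.
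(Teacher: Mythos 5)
Your argument is correct: the dichotomy into potentially good reduction (where Serre--Tate/Kraus bounds $\#\bar{\rho}_{E,p}(I_\mfq)$ by a divisor of $24$, prime to $p>5$) and potentially multiplicative reduction (where the Tate parametrization exhibits $\bar{\rho}_{E,p}(I_\mfq)$ as unipotent of order $1$ or $p$, with order $p$ precisely when $p \nmid v_\mfq(q_E) = -v_\mfq(j_E)$) is exactly the standard proof of this statement. The paper itself gives no proof and simply cites \cite[Lemma 3.4]{FS15}, whose proof proceeds along the same lines as yours, so there is nothing to add.
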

   The following lemma determines the type of reduction of the Frey curve  $E$ at primes $\mfq \nmid 3pABC$.
	\begin{lem}
		\label{Type of reduction at q away from 2,p,B x^2=By^p+Cz^p}
		Let $(a,b,c)\in \mcO_K^3$ be a non-trivial primitive solution to the equation~\eqref{Ax^p+By^p=Cz^3} with exponent $p>5$, and let $E$ be the associated Frey curve. Suppose $\mfq \in P$ with $\mfq\nmid 3pABC$. Then $p \nmid \#\bar{\rho}_{E,p}(I_\mfq)$.
 	\end{lem}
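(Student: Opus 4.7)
The plan is to combine Lemma~\ref{reduction away from S} with Lemma~\ref{criteria for potentially multiplicative reduction}. Since the hypothesis $\mfq \nmid 3ABC$ places $\mfq$ outside $S_K^{\prime}$, Lemma~\ref{reduction away from S} already tells us that $E$ is semi-stable at $\mfq$ and that $p \mid v_\mfq(\Delta_E)$. So the work reduces to a case split on whether the reduction is good or multiplicative.

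First, I would note that Lemma~\ref{criteria for potentially multiplicative reduction} (which applies since $\mfq \nmid p$) reformulates the desired conclusion $p \nmid \#\bar{\rho}_{E,p}(I_\mfq)$ as: either $E$ does not have potentially multiplicative reduction at $\mfq$, or else $p \mid v_\mfq(j_E)$. In the good reduction case, the first alternative holds trivially, since good reduction precludes potentially multiplicative reduction. So the only substantive case is when $E$ has multiplicative reduction at $\mfq$.

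In the multiplicative case, minimality of the model at $\mfq$ (from Lemma~\ref{reduction away from S}) together with the explicit invariants
\[
c_4 = 3^2 C^3 c \bigl(9Aa^p + Bb^p\bigr), \qquad \Delta_E = 3^3 AB^3 C^8 (ab^3)^p,
\]
gives $v_\mfq(c_4) = 0$ (as is standard for a minimal multiplicative model: $\mfq \mid \Delta_E$ forces $\mfq \mid ab$, hence exactly one of $a, b$ by primitivity, and together with $\mfq \nmid 3ABCc$ this makes $9Aa^p + Bb^p$ a unit at $\mfq$). Then from $j_E = c_4^3 / \Delta_E$ we obtain $v_\mfq(j_E) = -v_\mfq(\Delta_E)$, and Lemma~\ref{reduction away from S} yields $p \mid v_\mfq(\Delta_E)$, hence $p \mid v_\mfq(j_E)$.

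In either case the criterion of Lemma~\ref{criteria for potentially multiplicative reduction} fails, so $p \nmid \#\bar{\rho}_{E,p}(I_\mfq)$. There is no real obstacle here; the argument is essentially a bookkeeping check, with the only mild subtlety being to verify $\mfq \nmid c_4$ in the multiplicative case (equivalently, $\mfq \nmid c$ and $\mfq \nmid 9Aa^p + Bb^p$), both of which follow from primitivity of $(a,b,c)$ and the assumption $\mfq \nmid 3ABC$.
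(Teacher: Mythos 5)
Your proposal is correct and follows essentially the same route as the paper: both reduce to the criterion of Lemma~\ref{criteria for potentially multiplicative reduction}, split into the good-reduction case ($\mfq\nmid\Delta_E$) and the multiplicative case ($\mfq\mid\Delta_E$), and in the latter use primitivity and $\mfq\nmid 3ABC$ to get $v_\mfq(c_4)=0$, hence $v_\mfq(j_E)=-v_\mfq(\Delta_E)$ divisible by $p$. The only cosmetic difference is that you invoke Lemma~\ref{reduction away from S} for the facts the paper reverifies inline.
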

	
	\begin{proof}
		By Lemma~\ref{criteria for potentially multiplicative reduction}, it is enough to show that
		either $v_\mfq(j_E) \geq 0$ or $p | v_\mfq(j_E)$. Recall that $\Delta_E=3^3AB^3C^8(ab^3)^p$ and $ c_4=3^2C^3c(9Aa^p+Bb^p)$.
		\begin{itemize}
			\item 	If $\mfq \nmid \Delta_E$, then $E$ has good reduction at $\mfq$, and hence $v_\mfq(j_E)\geq 0$.
			\item    If $\mfq | \Delta_E$ then $\mfq |ab$,
			and hence $\mfq$ divides exactly one of $a$ and $b$. Therefore, $\mfq \nmid c_4$ and $p |v_\mfq(j_E)=-p
			v_\mfq(ab^3)$.
		\end{itemize} 
	Hence, we are done with the proof of the lemma.
	\end{proof}
	We will now discuss the type of reduction of $E_{a,b,c}$ at  $\mfP \in S_K$ with $(a,b,c) \in W_K$.
	\begin{lem}
		\label{reduction on T and S}
	Let $\mfP \in S_K$. Let $(a,b,c)\in W_K$ be a solution to~\eqref{Ax^p+By^p=Cz^3} with exponent
		$ p > \max \left\{ 3v_\mfP(3)+ v_\mfP(ABC), |3 v_\mfP(3)\pm v_\mfP(AB^{- 1})| \right\}.$ Let $E:=E_{a,b,c}$ be the associated Frey curve. Then $\ v_\mfP(j_E) < 0$ and $p \nmid v_\mfP(j_E)$, equivalently $p | \#\bar{\rho}_{E,p}(I_\mfP)$.
    \end{lem}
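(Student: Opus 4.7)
The plan is to compute $v_\mfP(j_E)$ directly from the explicit formula for $j_E$, show that the $-p\,v_\mfP(ab^3)$ term dominates so that $v_\mfP(j_E) < 0$ with $p \nmid v_\mfP(j_E)$, and then invoke Lemma~\ref{criteria for potentially multiplicative reduction} to conclude $p \mid \#\bar{\rho}_{E,p}(I_\mfP)$. Since $(a,b,c) \in W_K$, the hypothesis $p > 3v_\mfP(3)+v_\mfP(ABC) \geq v_\mfP(C)$ combined with Remark~\ref{remark for W_K and W_K'} forces $\mfP$ to divide exactly one of $a$ and $b$. I would split into Case~(A) where $\mfP \mid a$ and $\mfP \nmid b$ and Case~(B) where $\mfP \nmid a$ and $\mfP \mid b$, and argue them in parallel.

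In each case, the bound $p > |3v_\mfP(3) \pm v_\mfP(AB^{-1})|$ (read as ``larger than the absolute value for either choice of sign'', which amounts to $p > 3v_\mfP(3)+|v_\mfP(AB^{-1})|$) makes the two summands in both $Aa^p+Bb^p$ and $9Aa^p+Bb^p$ have distinct $\mfP$-adic valuations, so the valuation of each sum is that of the smaller summand. Specifically, in Case~(A), $v_\mfP(Aa^p+Bb^p)=v_\mfP(B)=v_\mfP(9Aa^p+Bb^p)$, while in Case~(B) these valuations equal $v_\mfP(A)$ and $2v_\mfP(3)+v_\mfP(A)$ respectively. Using $Aa^p+Bb^p=Cc^3$ to read off $3v_\mfP(c)$ and substituting everything into
\[
v_\mfP(j_E) = 3v_\mfP(3)+v_\mfP(C)+3v_\mfP(c)+3v_\mfP(9Aa^p+Bb^p)-v_\mfP(A)-3v_\mfP(B)-p\,v_\mfP(ab^3),
\]
the $v_\mfP(C)$ contributions cancel and one obtains respectively
\[
v_\mfP(j_E) = \bigl(3v_\mfP(3)-v_\mfP(AB^{-1})\bigr) - p\,v_\mfP(a), \qquad v_\mfP(j_E) = 3\bigl(3v_\mfP(3)+v_\mfP(AB^{-1})\bigr) - 3p\,v_\mfP(b).
\]
The hypothesis on $p$ makes the subtracted $p$-multiple dominate, so $v_\mfP(j_E)<0$; and the bracketed constants have absolute value strictly less than $p$, so $p \nmid v_\mfP(j_E)$. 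Finally, Lemma~\ref{criteria for potentially multiplicative reduction} (applied at $\mfP \nmid p$, which is automatic since $p$ exceeds $3v_\mfP(3) \geq 3$) translates this into the asserted statement $p \mid \#\bar{\rho}_{E,p}(I_\mfP)$.

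The main obstacle is essentially bookkeeping: one must check that a single bound on $p$ simultaneously controls several strict valuation comparisons -- both $v_\mfP(Aa^p)$ vs $v_\mfP(Bb^p)$ and $v_\mfP(9Aa^p)$ vs $v_\mfP(Bb^p)$ in each of the two cases -- and also that the residual constants arising in $v_\mfP(j_E) \bmod p$ remain smaller than $p$ in absolute value. No truly delicate idea is needed beyond this matching of constants, so the argument reduces to a careful double case analysis guided by the explicit shape of $j_E$.
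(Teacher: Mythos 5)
Your proposal is correct and follows essentially the same route as the paper: split into the two cases $\mfP\mid a$ and $\mfP\mid b$ via Remark~\ref{remark for W_K and W_K'}, compute $v_\mfP(j_E)$ using that the two summands of $Aa^p+Bb^p$ and of $9Aa^p+Bb^p$ have distinct valuations, arrive at exactly the same expressions $\bigl(3v_\mfP(3)-v_\mfP(AB^{-1})\bigr)-pv_\mfP(a)$ and $3\bigl(3v_\mfP(3)+v_\mfP(AB^{-1})-pv_\mfP(b)\bigr)$, and conclude via Lemma~\ref{criteria for potentially multiplicative reduction}. (Both your argument and the paper's tacitly assume the bracketed constant is nonzero when deducing $p\nmid v_\mfP(j_E)$, so this is a shared, not a new, caveat.)
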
 
    \begin{proof}
    Recall that $ j_E=3^{3} \frac{Cc^3(9Aa^p+Bb^p)^3}{AB^3(ab^3)^p}=3^{3} \frac{(Aa^p+Bb^p)(9Aa^p+Bb^p)^3}{AB^3(ab^3)^p}.$ Since $p >v_\mfP(C)$, by Remark~\ref{remark for W_K and W_K'}, we have $\mfP|a$ or $\mfP|b$ but not both.
%
    	\begin{itemize}
    	\item If $\mfP |a$, then $\mfP \nmid b$. 
    	Since $p > v_\mfP(B)$, $v_\mfP(j_E)= 3v_\mfP(3)+v_\mfP(B)+ 3v_\mfP(B)-v_\mfP(AB^3)-pv_\mfP(a) =  3v_\mfP(3)+v_\mfP(BA^{-1})-pv_\mfP(a)$. Since $p > |3v_\mfP(3)+v_\mfP(BA^{-1})|$, $v_\mfP(j_E) <0$ and $p \nmid v_\mfP(j_E)$.
    	\item If $\mfP |b$, then $\mfP \nmid a$. 
    		Since $p > 3v_\mfP(3)+ v_\mfP(A)$, $v_\mfP(j_E)= 3v_\mfP(3)+v_\mfP(A)+6v_\mfP(3)+ 3v_\mfP(A)-v_\mfP(AB^3)-3pv_\mfP(b)=3 \left( 3v_\mfP(3)+v_\mfP(AB^{-1})-pv_\mfP(b) \right)$. Since $p > |3v_\mfP(3)+v_\mfP(AB^{-1})|$, $v_\mfP(j_E) <0$ and $p \nmid v_\mfP(j_E)$.  
    \end{itemize}
    		Hence, by Lemma~\ref{criteria for potentially multiplicative reduction}, we get $p | \#\bar{\rho}_{E,p}(I_\mfP)$.
	\end{proof}

%
%


	\subsection{Proof of Theorem~\ref{main result1 for x^2=By^p+Cz^p Type I}.}
%
The proof of this theorem depends on the following auxiliary result:
	\begin{thm}
		\label{auxilary result x^2=By^p+Cz^p over W_K}
		Let $K$ be a totally real field. Then, there is a constant $V=V_{K,A,B,C}>0$ (depending on $K,A,B,C$) such that the following hold.
	    Let $(a,b,c)\in W_K$ be a solution to the equation \eqref{Ax^p+By^p=Cz^3} with exponent $p>V$, and let $E$ be the Frey curve as in \eqref{Frey curve for x^2=By^p+Cz^p of Type I}. Then, there exists an elliptic curve $E^\prime/K$ such that:
		\begin{enumerate}
			\item $E^\prime/K$ has good reduction away from $S_K^{\prime}$ and has a non-trivial $3$-torsion point;
			\item $\bar{\rho}_{E,p} \sim\bar{\rho}_{E^\prime,p}$, and  $v_\mfP(j_{E^\prime})<0$ for $\mfP \in S_K$.
		\end{enumerate}
	\end{thm}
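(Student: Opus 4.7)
The plan is to apply the level-lowering theorem (Theorem~\ref{level lowering of mod $p$ repr}) to the Frey curve $E$ and then use the hypothesis $\Cl_{S_K^{\prime}}(K)[3]=1$ together with the $K$-rational $3$-torsion point of $E$ to extract an elliptic curve $E'$ from the resulting newform.

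First I would verify the hypotheses of Theorem~\ref{level lowering of mod $p$ repr} for $p$ exceeding an effective constant $V=V_{K,A,B,C}$. The ramification condition $e(\mfq/p)<p-1$ and the condition $\Q(\zeta_p)^+\nsubseteq K$ are automatic once $p$ is larger than $[K:\Q]$ and $2[K:\Q]+1$, respectively; modularity of $E$ is Theorem~\ref{modularity of Frey curve x^2=By^p+Cz^p over W_K}; irreducibility of $\bar{\rho}_{E,p}$ for $p\gg 0$ follows from the Freitas--Siksek irreducibility theorem \cite[Theorem 2]{FS15 Irred}; and for $p$ exceeding the norms of all primes in $S_K^{\prime}$, no prime above $p$ lies in $S_K^{\prime}$, so Lemma~\ref{reduction away from S} supplies semi-stability at every $\mfq\mid p$ together with $p\mid v_\mfq(\Delta_E)$. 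Applying Theorem~\ref{level lowering of mod $p$ repr} yields a Hilbert newform $f$ over $K$ of parallel weight $2$, level $\mfn_p$ supported on $S_K^{\prime}$, and a prime $\omega\mid p$ of $\Q_f$ with $\bar{\rho}_{E,p}\sim\bar{\rho}_{f,\omega}$.

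Next I would exploit the $K$-rational $3$-torsion point $(0,0)\in E(K)$: the tangent line $Y=0$ at $(0,0)$ meets $E$ only there with intersection multiplicity three, so $3\cdot(0,0)=O$. This gives a $K$-rational $3$-isogeny on $E$. The plan is to combine this with the Eichler--Shimura results available over $K$ (Darmon~\cite[Theorem 7.7]{D04} and the Freitas--Siksek partial analogue~\cite[Corollary 2.2]{FS15}) to attach an elliptic curve $E'/K$ to $f$ with conductor dividing $\mfn_p$, and then to use $\Cl_{S_K^{\prime}}(K)[3]=1$ to descend the $3$-isogeny structure so that $E'$ carries a $K$-rational $3$-torsion point. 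The class group hypothesis is designed to kill the cubic characters that obstruct this descent, and also to help rule out the fake elliptic curve case in the Freitas--Siksek dichotomy. By construction, $E'/K$ has good reduction away from $S_K^{\prime}$, a $K$-rational $3$-torsion point, and $\bar{\rho}_{E,p}\sim\bar{\rho}_{E',p}$.

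Finally, for the condition $v_\mfP(j_{E'})<0$ at $\mfP\in S_K$, I would compare inertia images. Lemma~\ref{reduction on T and S} gives $p\mid \#\bar{\rho}_{E,p}(I_\mfP)$, which transfers to $p\mid \#\bar{\rho}_{E',p}(I_\mfP)$ via $\bar{\rho}_{E,p}\sim\bar{\rho}_{E',p}$. Since $\mfP\mid 3$ (in particular $\mfP\nmid p$) and $p>5$, Lemma~\ref{criteria for potentially multiplicative reduction} applied to $E'$ forces potentially multiplicative reduction at $\mfP$ with $p\nmid v_\mfP(j_{E'})$, hence $v_\mfP(j_{E'})<0$. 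The main obstacle is the middle step: producing $E'$ from $f$ with the required $3$-torsion structure relies on Eichler--Shimura-type inputs that are unconditional only in special cases, and the class group hypothesis $\Cl_{S_K^{\prime}}(K)[3]=1$ must be deployed carefully to descend the $3$-isogeny from $E$ and to rule out fake elliptic curves uniformly across all $p\gg 0$.
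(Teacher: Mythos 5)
Your overall skeleton (level lowering, Eichler--Shimura over $K$, a $3$-torsion point on $E'$, then the inertia comparison at $\mfP\mid 3$) matches the paper, and your first and last steps are essentially the paper's: in particular the closing argument via Lemma~\ref{reduction on T and S}, the isomorphism $\bar{\rho}_{E,p}\sim\bar{\rho}_{E',p}$, and Lemma~\ref{criteria for potentially multiplicative reduction} is exactly right. The problem is the middle step.

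First, you invoke the hypothesis $\Cl_{S_K^{\prime}}(K)[3]=1$, which is \emph{not} a hypothesis of this theorem (it appears only in Theorem~\ref{main result1 for x^2=By^p+Cz^p Type I}, where it is used for an entirely different purpose: to rewrite $\lambda=a'^3/b'$ as an $S_K'$-unit times a cube in the Diophantine analysis of $j_{E'}$). More seriously, ``descending the $3$-isogeny structure'' from $E$ to $E'$ via a class-group condition is not an argument that can be carried out: $E$ and $E'$ are only linked through their mod $p$ representations, and there is no mechanism by which a $3$-isogeny on $E$ transfers to $E'$. The correct route, which the paper takes (following Mocanu), is a trace-of-Frobenius comparison: by Shafarevich there are only finitely many $\bar{K}$-isomorphism classes of elliptic curves with good reduction outside $S_K'$; the Frey curve has the rational $3$-torsion point $(0,0)$, so $3\mid \#E(\F_\mfq)=N\mfq+1-a_\mfq(E)$ at every good prime $\mfq$; if no curve in the isogeny class of $E'$ has a rational point of order $3$, one finds a good prime $\mfq$ with $3\nmid N\mfq+1-a_\mfq(E')$, whence $a_\mfq(E)-a_\mfq(E')$ is a nonzero integer bounded by $4\sqrt{N\mfq}$ that must be divisible by $p$ --- impossible for $p$ large. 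This is what lets one enlarge $V$ and replace $E_f$ by an isogenous curve $E'$ carrying a $3$-torsion point. Relatedly, the ``fake elliptic curve'' alternative in the Freitas--Siksek version of Eichler--Shimura is not ruled out by any class-group condition but precisely by the fact (Lemma~\ref{reduction on T and S}) that $E$ has potentially multiplicative reduction at $\mfP\in S_K$ with $p\mid\#\bar{\rho}_{E,p}(I_\mfP)$, after discarding the finitely many primes $p$ dividing $N\mfP\pm 1$; you should also record the standard reduction to $\Q_f=\Q$ before invoking Eichler--Shimura. As written, the proposal does not produce the curve $E'$ with its $3$-torsion point, so the proof is incomplete at its central step.
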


    \begin{proof}[Proof of Theorem~\ref{auxilary result x^2=By^p+Cz^p over W_K}] 
		By Theorem~\ref{modularity of Frey curve x^2=By^p+Cz^p over W_K}, $E$ is modular for primes $p>D:=D_{K,A,B,C}$ with $D \gg 0$. By Lemma~\ref{reduction away from S}, $E$ is semi-stable away from $S_K^{\prime}$. If necessary, we can take the Galois closure of $K$ to ensure that $\bar{\rho}_{E,p}$ is irreducible for $p \gg 0$ (cf.~\cite[Theorem 2]{FS15 Irred}).
		
		By Theorem~\ref{level lowering of mod $p$ repr}, there exists a Hilbert
        modular newform $f$ of parallel weight $2$, level $\mfn_p$ and some prime $\omega$ of $\Q_f$ such that $\omega | p$ and $\bar{\rho}_{E,p} \sim \bar{\rho}_{f,\omega}$ for $p \gg 0$.
By allowing $p$ to be sufficiently large, we can assume  $\Q_f=\Q$. This step uses standard ideas originally due to Mazur that can be found in~\cite[\S 4]{BS04}, \cite[Proposition 15.4.2]{C07}, and~\cite[\S 4]{FS15}.

		Let $\mfP \in S_K$. Then $E$ has potential multiplicative reduction at $\mfP$ and $p | \#\bar{\rho}_{E,p}(I_\mfP)$ for $p \gg 0$
		(cf. Lemma~\ref{reduction on T and S}). The existence of $E_f$ then follows from 
		\cite[Corollary 2.2]{FS15} 
		for all $p\gg 0$ after leaving primes $p$ with $p \mid \left( \text{Norm}(K/\Q)(\mfP) \pm 1 \right)$. Therefore, $\bar{\rho}_{E,p} \sim \bar{\rho}_{E_f,p}$ for some elliptic curve $E_f$ with conductor $\mfn_p$ for $p>V=V_{K,A,B,C}$, where $V_{K,A,B,C}$ is the maximum of all the above implicit/explicit lower bounds.
		
		\begin{itemize}
			\item Since the conductor of $E_f$ is $\mfn_p$ given in \eqref{conductor of E and E' x^2=By^p+Cz^p Type I}, $E_f$ has good reduction away from $S_K^{\prime}$. Now, arguing as in~\cite[page 1247]{M22}, we can enlarge the constant $V$ and by possibly replacing $E_f$ with an isogenous curve, say $E^\prime$, we get $E^\prime/ K$ has a non-trivial $3$-torsion point. Since $E_f \sim E^\prime$, $E^\prime$ has good reduction away from $S_K^{\prime}$. 

			\item Since $E_f$ is isogenous to $E^\prime$ and $\bar{\rho}_{E,p} \sim \bar{\rho}_{E_f,p}$ implies $\bar{\rho}_{E,p} \sim \bar{\rho}_{E^\prime,p}$. As a result, we obtain $p | \# \bar{\rho}_{E,p}(I_\mfP)=\# \bar{\rho}_{E^\prime,p}(I_\mfP)$ for any $\mfP \in S_K$. Finally, by Lemma~\ref{criteria for potentially multiplicative reduction}, we have $v_\mfP(j_{E^\prime})<0$ for any $\mfP \in S_K$.
			\end{itemize}
		This completes the proof of the theorem.
	\end{proof}
	We now prove Theorem~\ref{main result1 for x^2=By^p+Cz^p Type I}, and its inspired from that of~\cite[Theorem 3]{M22}.
	\begin{proof}[Proof of Theorem~\ref{main result1 for x^2=By^p+Cz^p Type I}]
		Suppose $(a,b,c)\in W_K$ is a solution to the equation~\eqref{Ax^p+By^p=Cz^3} with exponent $p>V$, where $V=V_{K,A,B,C}$ be the constant as in Theorem~\ref{auxilary result x^2=By^p+Cz^p over W_K}. By Theorem~\ref{auxilary result x^2=By^p+Cz^p over W_K}, there exists an elliptic curve $E^\prime/K$ having a non-trivial $3$-torsion point and good reduction away from $S_K^{\prime}$.
Then the elliptic curve $E^\prime/K$ has a model of the form
		\begin{equation}
			\label{j invariant of E'}
		E^\prime:y^2+a'xy+b'y=x^3
		\end{equation}		
	for some $a',b' \in K$ with $j$-invariant $j_{E^\prime}= \frac{a'^3(a'^3-24b')^3}{b'^3(a'^3-27b')}$. Since $E^\prime$ has good reduction away from $S_K^{\prime}$, $j_{E^\prime} \in \mcO_{S_K^{\prime}}$. 
	
	Take $\lambda := \frac{a'^3}{b'}$ and $\mu := \lambda-27$. Then $\lambda \in \mcO_{S_K^{\prime}}$ and $ \mu \in \mcO_{S_K^{\prime}}^\ast$
	(cf.~\cite[Lemma 16(ii)]{M22}). By~\cite[Lemma-17(ii)]{M22},
	we get $ \lambda  \mcO_K=I^3J$ for some fractional ideal $I$ and $S_K^{\prime}$-ideal $J$. Since $J$ is $S_K^{\prime}$-ideal,  $1= [I]^3 \in \Cl_{S_K^{\prime}}(K)$. By hypothesis $\Cl_{S_K^{\prime}}(K)[3]=1$ which gives $I=\gamma I_1$ for some
	$\gamma \in \mcO_K$ and $S_K^{\prime}$-ideal $I_1$. Thus, $\lambda \mcO_K=\gamma^3I_1^3J$ and hence $(\frac{\lambda}{\gamma^3}) \mcO_K$ is an $S_K^{\prime}$-ideal. Therefore, $u=\frac{\lambda}{\gamma^3} \in \mcO_{S_K^{\prime}}^\ast$.
	Now, divide the equation $\mu +27=\lambda$ by $u$ to obtain 
	$\alpha +\beta =\gamma^3$, where $\alpha= \frac{\mu}{u} \in \mcO_{S_K^{\prime}}^\ast$ and $\beta =\frac{27}{u} \in \mcO_{S_K^{\prime}}^\ast$, which implies $ \alpha \beta^{-1}=\frac{\mu}{27}$.
	By~\eqref{assumption for main result1 for x^2=By^p+Cz^p Type I}, there exists $\mfP \in S_K$ with $|v_\mfP(\alpha \beta^{-1})|= |v_\mfP(\frac{\mu}{27} )| \leq 3 v_\mfP(3)$. This means
	\begin{equation}
		\label{inequality for valution of mu}
		0 \leq v_\mfP(\mu) \leq 6 v_\mfP(3).
	\end{equation}
    We now show that the bounds on $v_\mfP(\mu)$ would imply that $v_\mfP(j_{E^\prime}) \geq 0$. 
	Write $j_{E^\prime}$ in terms of $\mu$ yields $j_{E^\prime}= \frac{(\mu+27)(\mu+3)^3}{\mu}$,
	which means 
	\begin{equation}
		\label{j inv in terms of mu}
		v_\mfP(j_{E^\prime})=v_\mfP(\mu+27)+3 v_\mfP(\mu +3)-v_\mfP(\mu).
	\end{equation} 
\begin{itemize}
 \item If $0 \leq v_\mfP(\mu) \leq v_\mfP(3)$, then $v_\mfP(\mu+27)=v_\mfP(\mu)$ and $v_\mfP(\mu +3) \geq v_\mfP(\mu)$. 
       By \eqref{j inv in terms of mu}, we get $v_\mfP(j_{E^\prime})=3v_\mfP(\mu)\geq 0$.
 \item  If $  v_\mfP(3) < v_\mfP(\mu) \leq 3v_\mfP(3)$, then $v_\mfP(\mu+27)\geq v_\mfP(\mu)$ and $ v_\mfP(\mu +3)= v_\mfP(3)$. By \eqref{j inv in terms of mu}, we have  $v_\mfP(j_{E^\prime}) >0$.
 \item  If $  3v_\mfP(3) < v_\mfP(\mu) \leq 6v_\mfP(3)$, then $v_\mfP(\mu+27)= 3v_\mfP(3)$ and $ v_\mfP(\mu +3)= v_\mfP(3)$.
        By \eqref{j inv in terms of mu}, we have $v_\mfP(j_{E^\prime})=6v_\mfP(3)- v_\mfP(\mu) \geq 0$. 
\end{itemize}
In all cases, we get $v_\mfP(j_{E^\prime}) \geq 0$, which is a contradiction to Theorem~\ref{auxilary result x^2=By^p+Cz^p over W_K}. This completes the proof of the theorem.
\end{proof}
Now, we are in a position to prove Proposition~\ref{main result2 for x^2=By^p+Cz^p Type I}, and its proof is inspired from that of \cite[Theorem 11]{M22}.
\begin{proof}[Proof of Proposition~\ref{main result2 for x^2=By^p+Cz^p Type I}]
Let $S_K= \{\mfP\}$. By Theorem~\ref{main result1 for x^2=By^p+Cz^p Type I}, it suffices to show that for every solution $(\alpha, \beta, \gamma)\in  \mcO_{S_K}^\ast \times \mcO_{S_K}^\ast \times \mcO_{S_K}$ to the equation $\alpha+\beta=\gamma^3$, $\mfP$ satisfies $|v_\mfP(\alpha\beta^{-1}) |\leq 3v_\mfP(3)$. If necessary, by scaling cubic powers of $\mfP$ and swapping $\alpha, \beta$, we can assume $0\leq v_\mfP(\beta)\leq v_\mfP(\alpha)$ with $v_\mfP(\beta)=0$ or $1$ or $2$. 
	\begin{enumerate}
		\item Suppose $v_\mfP(\beta)= 1$ or $2$. If $v_\mfP(\alpha)>v_\mfP(\beta)$, then $v_\mfP(\gamma^3)=v_\mfP(\alpha+\beta)=v_\mfP(\beta)$, which cannot happen since $v_\mfP(\gamma^3)$ is a multiple of $3$. So, $v_\mfP(\alpha)=v_\mfP(\beta)$. Hence, $\left|v_\mfP(\alpha \beta^{-1}) \right|=0 <3v_\mfP(3)$.

		\item Suppose $v_\mfP(\beta)=0$. Then $\beta \in \mcO_K^\ast$.
		\begin{itemize}
		 \item If $\beta$ is a cube, then divide the equation $\alpha +\beta= \gamma^3$ by $\beta$ to obtain an equation of the form $\alpha'+1=\gamma'^3$, where $\alpha'=  \alpha \beta^{-1} \in \mcO_{S_K^{\prime}}^\ast$ with $v_\mfP(\alpha')\geq 0$, and $\gamma' \in \mcO_{S_K^{\prime}}$. By~\eqref{assumption for main result2 for x^2=By^p+Cz^p Type I}, we obtain $|v_\mfP(\alpha \beta^{-1}) |\leq 3v_\mfP(3)$.

		 \item Suppose $\beta$ is not a cube. If $v_\mfP(\alpha)\leq 3v_\mfP(3)$, then we are done. Otherwise, $v_\mfP(\alpha)> 3v_\mfP(3)>1$. This gives $\alpha \equiv 0 \pmod {3^3}$ and $\gamma^3=\alpha+\beta \equiv \beta\pmod {3^3}$.
		 Since $v_\mfP(\gamma^3)= v_\mfP(\alpha+\beta)=0$ and $S_K=S_K^\prime$, we get $\gamma \in \mcO_K$. The field $L= K(\zeta_3, \beta^\frac{1}{3})$ is a degree $3$ extension of $K(\zeta_3)$.		 We will now show that $L$ is unramified at $3$ to get a contradiction to $3 \nmid  h_{K(\zeta_3)}$. Consider an element $\theta:= \frac{\gamma^2+\gamma \zeta_3 \beta^\frac{1}{3}+ \zeta_3^2 \beta^\frac{1}{3}}{3}$.
         The minimal polynomial of $\theta$ is $m_\theta(x)= x^3+\frac{\gamma( \gamma^3-\beta)}{3}x^2- \gamma^2 x - \frac{(\gamma^3 - \beta)^2}{27}$. Then $m_\theta(x) \in \mcO_K[x]$ with discriminant $\Delta_\theta=-\frac{2\gamma^3( \gamma^3-\beta)^3}{3^5}-\frac{4\gamma^3( \gamma^3-\beta)^5}{3^9}+\frac{\gamma^6( \gamma^3-\beta)^2}{3^2}-4\gamma^6-\frac{( \gamma^3-\beta)^4}{3^3}$. 
		 Since $\Delta_\theta \equiv -4\gamma^6 \pmod 3$ and $v_\mfP(\gamma^3)=0$, $L$ is unramified at $3$, which contradicts our hypothesis that $3 \nmid h_{K(\zeta_3)}$.
        \end{itemize}
    \end{enumerate}
This completes the proof of the proposition.
\end{proof}

\section{Solutions of $Ax^p+By^p=Cz^3$ over $K$}
\label{section for $x^2=By^p+2^rz^p$ and $2x^2=By^p+2^rz^p$ over $K$} 
In this section, we shall examine the $K$-solutions of the equation
\begin{equation}
	\label{Ax^p+By^p=qz^3}
	Ax^p+By^p=Cz^3
\end{equation}
with exponent $p$, where $A,B,C \in \mcO_K \setminus\{0\}$. Throughout this section, we assume $C \in \mcO_K^*$ or $C=uq$  with $u \in \mcO_K^\ast$ and $q \in \mathbb{P} \setminus \{3\}$. In both cases, $v_\mfP(C)=0$. Recall that $S_K^{\prime}=\{\mfP \in P : \mfP |3ABC\}$.

\subsection{Main result}
\label{section for main result over K}
We write $(ES)$ for ``either $[K: \Q]\equiv 1 \pmod 2$ or Conjecture \ref{ES conj} holds for $K$". 
Let $S \subseteq \mcO^3_K$ be the set of all solutions of \eqref{Ax^p+By^p=qz^3} of the form $\left\{ (u, \pm u, c) : u \in \mcO_K^*,\ c \in \mcO_K\setminus \{0\} \right\}.$
We now show that the equation \eqref{Ax^p+By^p=qz^3} with exponent $p$ has no asymptotic solution in $\mcO_K^3  \setminus S$. More precisely,
\begin{thm}
	\label{main result1 for Ax^p+By^p=qz^3 over K}
	Let $K$ be a totally real field satisfying $(ES)$ with $\Cl_{S_K^{\prime}}(K)[3]=1$. Assume $3$ is inert in $K$ and let $S_K= \{\mfP\}$. Suppose for every solution $(\alpha, \beta, \gamma) \in \mcO_{S_K^{\prime}}^\ast \times \mcO_{S_K^{\prime}}^\ast \times \mcO_{S_K^{\prime}}$ to $\alpha+\beta=\gamma^3$, $\mfP$ satisfies
	\begin{equation}
		\label{assumption for main result1 x^2=By^p+2^rz^p over K}
		v_\mfP( \alpha \beta^{-1}) =2.
	\end{equation}
	Further, if $v_\mfP(A)=1$ and $ v_\mfP(B) \in \{0,2\}$, then the equation $Ax^p+By^p=Cz^3$ with exponent $p$ has no asymptotic solution in $\mcO_K^3  \setminus S$.
\end{thm}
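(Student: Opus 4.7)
The plan is to execute the modular-method strategy of the introduction in full, with the Mocanu-style numerical check at the end calibrated to the hypothesis $v_\mfP(\alpha\beta^{-1}) = 2$ in place of $|v_\mfP(\alpha\beta^{-1})| \leq 3v_\mfP(3)$. Fix a non-trivial primitive solution $(a,b,c) \in \mcO_K^3 \setminus S$ to \eqref{Ax^p+By^p=qz^3} with $p$ large, attach the Frey curve $E = E_{a,b,c}$ from \eqref{Frey curve for x^2=By^p+Cz^p of Type I}, and run the usual chain: modularity for $p \gg 0$ via Theorem~\ref{modularity of Frey curve x^2=By^p+Cz^p over W_K}, semistability away from $S_K^\prime$ with $p \mid v_\mfq(\Delta_E)$ (Lemma~\ref{reduction away from S}), irreducibility of $\bar\rho_{E,p}$ for $p \gg 0$ (after enlarging $K$ if necessary), and level-lowering via Theorem~\ref{level lowering of mod $p$ repr}. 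The condition $(ES)$ together with the standard Mazur-type rationality argument on the coefficient field forces $\Q_f = \Q$ and hence produces an elliptic curve $E_f/K$ of conductor $\mfn_p$ with $\bar\rho_{E,p} \sim \bar\rho_{E_f,p}$; I then replace $E_f$ by an isogenous $E'/K$ equipped with a non-trivial $3$-torsion point and still having good reduction away from $S_K^\prime$, exactly as in the proof of Theorem~\ref{auxilary result x^2=By^p+Cz^p over W_K}.

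The new ingredient compared to the $W_K$-case is the inertia analysis at the unique $\mfP \in S_K$, since $(a,b,c) \in \mcO_K^3 \setminus S$ need not satisfy $\mfP \mid ab$. Using $Aa^p + Bb^p = Cc^3$ with $v_\mfP(A) = 1$, $v_\mfP(B) \in \{0,2\}$, $v_\mfP(C) = 0$, and primitivity, an elementary valuation comparison rules out the case $v_\mfP(B) = 2$ entirely (for $p \geq 2$) and splits the case $v_\mfP(B) = 0$ into the two sub-cases $\mfP \mid a,\ \mfP \nmid b$ (handled as in Lemma~\ref{reduction on T and S}: $v_\mfP(j_E) < 0$, hence $p \mid \#\bar\rho_{E,p}(I_\mfP)$ by Lemma~\ref{criteria for potentially multiplicative reduction}) and $\mfP \nmid ab$. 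In the second sub-case a direct computation gives $v_\mfP(\Delta_E) = 4$ and $v_\mfP(c_4) = 2$, so $v_\mfP(j_E) = 2 \geq 0$ and $E$ has potentially good additive reduction at $\mfP$; Tate's algorithm together with the comparison lemma of Kraus (Lemma~\ref{image of inertia comparision}) then pins down $\#\bar\rho_{E,p}(I_\mfP) \in \{3,6\}$. The combination of both sub-cases is Lemma~\ref{reduction on T and S x^2=By^p+Cz^p Type II over K}. Via $\bar\rho_{E,p} \sim \bar\rho_{E',p}$ the same dichotomy descends to $E'$, and converting it via Lemma~\ref{criteria for potentially multiplicative reduction} and the $3$-isogeny structure of $E'$ yields the trichotomy: either $v_\mfP(j_{E'}) < 0$, or $v_\mfP(j_{E'}) \geq 0$ with $v_\mfP(j_{E'}) \equiv 0 \text{ or } 2 \pmod 3$.

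The Mocanu step is then a direct numerical computation. Writing $E'$ as $y^2 + a'xy + b'y = x^3$, setting $\lambda = a'^3/b'$ and $\mu = \lambda - 27$, and using $\Cl_{S_K^\prime}(K)[3] = 1$ as in the proof of Theorem~\ref{main result1 for x^2=By^p+Cz^p Type I}, I produce $u \in \mcO_{S_K^\prime}^\ast$ and $\gamma \in \mcO_{S_K^\prime}$ with $\alpha := \mu/u,\ \beta := 27/u \in \mcO_{S_K^\prime}^\ast$ and $\alpha + \beta = \gamma^3$. Hypothesis~\eqref{assumption for main result1 x^2=By^p+2^rz^p over K} applied to this triple gives $v_\mfP(\mu) - v_\mfP(27) = 2$, and since $3$ is inert we have $v_\mfP(3) = 1$, whence $v_\mfP(\mu) = 5$; then $v_\mfP(\mu + 27) = 3$ and $v_\mfP(\mu + 3) = 1$, so the formula $j_{E'} = (\mu+27)(\mu+3)^3/\mu$ yields
\[
v_\mfP(j_{E'}) \;=\; v_\mfP(\mu+27) + 3 v_\mfP(\mu+3) - v_\mfP(\mu) \;=\; 3 + 3 - 5 \;=\; 1.
\]
Since $1 \geq 0$ and $1 \not\equiv 0, 2 \pmod 3$, this contradicts the trichotomy of the previous paragraph, completing the proof. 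The main obstacle is the potentially good case $\mfP \nmid ab$: at residue characteristic $3$ wild ramification can inflate the inertia image beyond its tame estimate, so pinning down $\#\bar\rho_{E,p}(I_\mfP) \in \{3,6\}$ requires a careful Tate-algorithm analysis on the Frey invariants $(v_\mfP(c_4), v_\mfP(c_6), v_\mfP(\Delta_E)) = (2, 3, 4)$ combined with the Kraus comparison; once Lemma~\ref{reduction on T and S x^2=By^p+Cz^p Type II over K} is in hand, the remainder is the routine chain above.
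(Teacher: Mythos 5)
Your proposal is correct and follows essentially the same route as the paper: the same Frey curve and modular-method chain, the same inertia dichotomy at the unique prime above $3$ (your Lemma~\ref{reduction on T and S x^2=By^p+Cz^p Type II over K} via the Kraus comparison), and the same Mocanu-style endgame computing $v_\mfP(j_{E^\prime})=1$ to contradict the trichotomy. The only deviations are cosmetic: for modularity you should invoke Theorem~\ref{modularity of Frey curve of x^2=By^p+2^rz^p over K} (the $\mcO_K^3\setminus S$ version, which is where the exclusion of $S$ is actually used) rather than the $W_K$ version, and your elementary observation that $v_\mfP(B)=2$ admits no primitive solutions at all validly replaces the paper's treatment of that case via $v_\mfP(\Delta_E)=10$ in the Kraus lemma.
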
	
\subsection{Modularity of Frey elliptic curves}
We now prove the modularity of the Frey curve $E:=E_{a,b,c}$ in \eqref{Frey curve for x^2=By^p+Cz^p of Type I} associated to any non-trivial primitive solution $(a,b,c)\in$ $\mcO_K^3 \setminus S$ for primes $p \gg 0$.
\begin{thm}
	\label{modularity of Frey curve of x^2=By^p+2^rz^p over K}
		Let K be a totally real number field. Then, there exists a constant $D:=D_{K,A,B,C}$ (depending on $K,A,B,C$) such that
	for any non-trivial primitive solution $(a,b,c)\in \mcO_K^3 \setminus S$ to the equation~\eqref{Ax^p+By^p=qz^3} with exponent $p>D$, the Frey curve $E:=E_{a,b,c}$ given in~\eqref{Frey curve for x^2=By^p+Cz^p of Type I} is modular.
\end{thm}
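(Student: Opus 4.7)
My plan is to follow the proof of Theorem~\ref{modularity of Frey curve x^2=By^p+Cz^p over W_K} almost verbatim, with the key modification being how the ``root of unity'' exceptional case $b = \pm a$ is handled. There, membership in $W_K$ combined with Remark~\ref{remark for W_K and W_K'} excluded this case; here, instead, I will exploit $(a,b,c) \notin S$---which forces $a \notin \mcO_K^\ast$ whenever $b = \pm a$---together with a local analysis of the reduced equation $(A \pm B)a^p = Cc^3$.

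First, I invoke~\cite[Theorem 5]{FLHS15} to fix a finite list $j_1, \ldots, j_s \in K$ of $j$-invariants of the non-modular elliptic curves over $K$ (up to $\bar{K}$-isomorphism). Writing $\mu = \mu(E) := Bb^p/(Aa^p)$, we have $j_E = 3^3(1+\mu)(9+\mu)^3/\mu^3$, so each equation $j_E = j_i$ is polynomial in $\mu$ of degree at most $4$; collectively they yield a finite list $\mu_1, \ldots, \mu_n \in K$ with $n \leq 4s$ such that $E$ is modular whenever $\mu(E) \notin \{\mu_1, \ldots, \mu_n\}$. If $\mu(E) = \mu_k$, then $(b/a)^p = A\mu_k/B$, and I split into two cases.

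In Case (i), $A\mu_k/B \neq \pm 1$: since $K$ is totally real, $A\mu_k/B$ is not a root of unity. For $p$ larger than $\max_{\mfq}|v_\mfq(A\mu_k/B)|$, the condition $(b/a)^p = A\mu_k/B$ forces $A\mu_k/B \in \mcO_{S_k}^\ast$, where $S_k$ is its finite support; as a fixed non-torsion element of this finitely generated abelian group, $A\mu_k/B$ is a $p$-th power for only finitely many primes $p$, bounded by some explicit $p_k$. In Case (ii), $\mu_k = \pm B/A$, so $(b/a)^p = \pm 1$ forces $b = \pm a$ (as $p$ is odd and $K$ is totally real), whence $(a,b,c) \notin S$ gives $a \notin \mcO_K^\ast$ and the equation reduces to $(A \pm B)a^p = Cc^3$. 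If $A \pm B = 0$ then $c = 0$, contradicting non-triviality. Otherwise, for $\mfq \nmid C(A \pm B)$ the identity $p\, v_\mfq(a) = 3\, v_\mfq(c)$ combined with primitivity and $p > 3$ forces $v_\mfq(a) = 0$, while for $\mfq \mid C(A \pm B)$ with $v_\mfq(a) \geq 1$, primitivity gives $v_\mfq(c) = 0$ and hence $p\, v_\mfq(a) = v_\mfq(C) - v_\mfq(A \pm B)$, impossible once $p > \max_{\mfq \mid C(A \pm B)}|v_\mfq(C) - v_\mfq(A \pm B)|$. Hence $a \in \mcO_K^\ast$, contradicting $(a,b,c) \notin S$.

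Setting $D$ to be the maximum of the $p_k$'s, the explicit bound from Case (ii), and $3$, the Frey curve $E_{a,b,c}$ is modular for all primes $p > D$. The main obstacle will be Case (ii): carrying out the $\mfq$-adic analysis cleanly (for primes both inside and outside the support of $C(A \pm B)$) to show that primitivity forces $a \in \mcO_K^\ast$ for $p$ large, and separately treating the degenerate situation $A \pm B = 0$. Case (i) is standard once one observes that non-torsion elements of a finitely generated abelian group can be $p$-th powers for only finitely many $p$.
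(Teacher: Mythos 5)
Your proposal is correct and follows essentially the same route as the paper: reduce via~\cite[Theorem 5]{FLHS15} to finitely many exceptional values $\mu_k$ of $\mu(E)$, observe that $\mu(E)=\mu_k$ either pins down $p$ or forces $b=\pm a$, and then use primitivity to show $a\in\mcO_K^\ast$, contradicting $(a,b,c)\notin S$. The only (harmless) difference is in that last step: the paper leans on the standing hypothesis of \S 3 that $C\in\mcO_K^\ast$ or $C=uq$ (so $\mfq^p\mid Cc^3$ with $p>[K:\Q]$ gives $\mfq\mid c$ at once), whereas your prime-by-prime valuation analysis of $(A\pm B)a^p=Cc^3$ works for arbitrary nonzero $C$ and also makes the handling of $A\mu_k/B$ not a root of unity slightly cleaner via the finitely generated $S$-unit group.
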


\begin{proof}
	Arguing as in the proof of Theorem~\ref{modularity of Frey curve x^2=By^p+Cz^p over W_K}, there exists $\mu_k \in K$ with $1 \leq k \leq n$ such that $E/K$ is modular for all $\mu(E) \notin\{\mu_1, \mu_2, ..., \mu_n\}$.
	If $\mu= \mu_k$ for some $k \in \{1, 2, \ldots, n \}$, then $\left(\frac{b}{a} \right)^p= \frac{A\mu_k}{B}$.
	The above equation determines $p$ uniquely, denoting it $p_k$. Otherwise, we get $b=\pm a$. Now, we will show that $a= u,\ b= \pm u$ for some $u \in \mcO_K^\ast.$
	
	Let $\mfq \in P$. If $\mfq |a$, then $\mfq^p|Aa^p+ Bb^p=Cc^3$.
If $C \in \mcO_K^\ast$, then $\mfq | c$.
If $C=uq$ with $u \in \mcO_K^\ast$ and $q \in \mathbb{P}$, then $\mfq^p|qc^3$. Taking $p> [K: \Q]$, we have $\mfq|c$. In both cases, we get a contradiction to the primitivity of $(a,b,c)$. Therefore, $a  \in \mcO_K^\ast$. Hence $(a,b,c) \in S$, which is a contradiction. Arguing as in the proof of Theorem~\ref{modularity of Frey curve x^2=By^p+Cz^p over W_K}, the proof of the theorem follows by taking $D=\max \big\{ p_1,...,p_m, [K:\Q] \big\}$.
\end{proof}

	\subsection{Reduction type of Frey elliptic curves}
The following lemma will be useful for the reduction of the Frey curve $E_{a,b,c}$ with $(a,b,c)\in$ $\mcO_K^3$ at $\mfP \in S_K$.
	\begin{lem}
	\label{image of inertia comparision}
	Let $E/K$ be an elliptic curve and $p\geq 5$ be a prime. Assume $3$ is unramified in $K$. Suppose $E$ has potential good reduction at $\mfP$ for some $\mfP \in S_K$. 
	\begin{enumerate}
	\item If $ v_\mfP(\Delta_E)=4$ or $10$, then $ \#\bar{\rho}_{E,p}(I_\mfP)=3$ or $6$.
	\item  If $ \#\bar{\rho}_{E,p}(I_\mfP)=3$ or $6$, then $ v_\mfP(\Delta_E) \in \{4,6,10,12\}$.
\end{enumerate}
\begin{proof}
	This lemma is a special case of \cite[Corollaire to Th\'{e}or\'{e}me 1]{K90}.
\end{proof}

\end{lem}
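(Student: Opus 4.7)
The lemma is essentially a transcription in the authors' notation of Kraus's classification of inertial types for elliptic curves with potential good reduction at a prime of residue characteristic $3$, so the plan is to reduce the statement to a direct lookup in his tables.

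The first step is localization: pass to the completion $K_\mfP$. Since $3$ is unramified in $K$, the extension $K_\mfP/\Q_3$ is unramified, so the normalized valuation $v_\mfP$, the Kodaira symbol of the minimal model, and the image $\bar{\rho}_{E,p}(I_\mfP)$ all behave exactly as over $\Q_3$. This puts us cleanly in the setting of \cite{K90}, whose Théorème 1 and its Corollaire tabulate, for an elliptic curve over $\Q_3$ with potential good reduction, the order of the inertial image of $\bar{\rho}_{E,p}$ for $p \geq 5$, in terms of the Kodaira symbol and certain refined invariants (valuations of $c_4, c_6$) that detect whether the wild part of inertia acts trivially.

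The second step is to recall the standard correspondence between Kodaira types and minimal-discriminant valuations for additive potential good reduction: types $II, III, IV, I_0^\ast, IV^\ast, III^\ast, II^\ast$ correspond to $v_\mfP(\Delta_E) = 2,3,4,6,8,9,10$, respectively (with $v_\mfP(\Delta_E) = 12$ reflecting good reduction after unramified base change, i.e., the Kodaira-type data up to a multiple of $12$). For part (1), one inspects the rows corresponding to $v_\mfP(\Delta_E) = 4$ (type $IV$) and $v_\mfP(\Delta_E) = 10$ (type $II^\ast$) in Kraus's table; his analysis shows that at residue characteristic $3$ the inertia image in these two cases is exactly of order $3$ (tame case) or $6$ (when a non-trivial wild contribution survives mod $p$). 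For part (2), one scans the table the other direction: the rows yielding $\#\bar{\rho}_{E,p}(I_\mfP)\in\{3,6\}$ are precisely those where $v_\mfP(\Delta_E)$ reduces modulo $12$ to an element of $\{4,6,10,0\}$, i.e., $v_\mfP(\Delta_E) \in \{4,6,10,12\}$ in Kraus's normalization.

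The main obstacle is purely bookkeeping: aligning Kraus's conventions (particularly his distinction between tame and wild cases via $c_4, c_6$ invariants) with the authors' notation, and verifying that passage from $\Q_3$ to the unramified extension $K_\mfP$ does not enlarge the list of possible inertia orders. Once this compatibility is established, both implications drop out of his corollary without further work, so no new argument is required beyond invoking \cite{K90}.
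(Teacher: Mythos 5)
Your proposal takes essentially the same route as the paper, whose proof is simply the citation of the Corollaire to Th\'eor\`eme 1 of \cite{K90}; your added localization step and table lookup are just an unpacking of that citation. (One small caution: the dictionary between Kodaira types and $v_\mfP(\Delta_E)$ that you quote is the tame one and can fail at residue characteristic $3$ because of wild ramification, but Kraus's corollary is phrased directly in terms of $v_\mfP(\Delta_E)$, so nothing in the argument actually relies on it.)
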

The following lemma specifies the type of reduction of the Frey curve $E:=E_{a,b,c}$ given in ~\eqref{Frey curve for x^2=By^p+Cz^p of Type I} at $\mfP \in S_K$ when $(a,b,c)\in$ $\mcO_K^3$. More precisely,

\begin{lem}
	\label{reduction on T and S x^2=By^p+Cz^p Type II over K}
	 Assume $3$ is inert in $K$ and let $S_K= \{\mfP\}$. Let $(a,b,c)\in \mcO_K^3$ be a non-trivial primitive solution to the equation~\eqref{Ax^p+By^p=qz^3} with exponent
	 $$p > \max \left\{ 3v_\mfP(3)+ v_\mfP(AB), |3v_\mfP(3)\pm v_\mfP(AB^{-1})| \right\}.$$ 
	 Let $E$ be the associated Frey curve. If $v_\mfP(A)=1$ and $ v_\mfP(B) \in \{0,2\}$, then either $p | \#\bar{\rho}_{E,p}(I_\mfP)$ or $\#\bar{\rho}_{E,p}(I_\mfP) \in \{3,6\}$.  
\end{lem}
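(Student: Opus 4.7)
The plan is to establish the dichotomy by a case analysis on the $\mfP$-adic valuations of $a$ and $b$. Since $v_\mfP(C) = 0$ (because $3$ is inert in $K$ and $C$ is either a unit or a rational prime distinct from $3$) and $(a,b,c)$ is primitive, the equation $Aa^p + Bb^p = Cc^3$ forces that $\mfP$ cannot divide both $a$ and $b$: otherwise $\mfP \mid Cc^3$ with $v_\mfP(C) = 0$ would give $\mfP \mid c$, violating primitivity. Accordingly, I would split into three subcases: (i) $\mfP \mid a$ and $\mfP \nmid b$; (ii) $\mfP \nmid a$ and $\mfP \mid b$; and (iii) $\mfP \nmid ab$.

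In each subcase the first step is to compute $v_\mfP(Aa^p + Bb^p) = 3 v_\mfP(c)$ and to exploit divisibility by $3$. Since $v_\mfP(A) = 1$ and $v_\mfP(B) \in \{0,2\}$, the valuations $v_\mfP(Aa^p)$ and $v_\mfP(Bb^p)$ cannot coincide: otherwise $1 - v_\mfP(B) = \pm 1$ would be divisible by $p \geq 5$. Hence $v_\mfP(Aa^p + Bb^p)$ equals the minimum of the two. A direct check then shows that subcase (ii) admits no solutions for $p \gg 0$ (the minimum is $1$, not a multiple of $3$), and that subcases (i) and (iii) admit solutions only when $v_\mfP(B) = 0$; the hypothesis $v_\mfP(B) = 2$ thus holds vacuously.

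In subcase (i) with $v_\mfP(B) = 0$, writing $n = v_\mfP(a) \geq 1$, one obtains $v_\mfP(c) = 0$ and $v_\mfP(9Aa^p + Bb^p) = 0$, so the explicit formula for $j_E$ yields $v_\mfP(j_E) = 3 v_\mfP(3) - v_\mfP(A) - pn = 2 - pn < 0$; since $v_\mfP(j_E) \equiv 2 \pmod p$ we have $p \nmid v_\mfP(j_E)$, and Lemma~\ref{criteria for potentially multiplicative reduction} yields $p \mid \#\bar{\rho}_{E,p}(I_\mfP)$. In subcase (iii) with $v_\mfP(B) = 0$, the same formulas give $v_\mfP(j_E) = 2 \geq 0$ and $v_\mfP(\Delta_E) = 3 v_\mfP(3) + v_\mfP(A) = 4$, so $E$ has potential good reduction at $\mfP$. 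The main obstacle is to check that the Frey model is minimal at $\mfP$ so that $v_\mfP(\Delta_E) = 4$ is the minimal-discriminant valuation demanded by Lemma~\ref{image of inertia comparision}(1); this follows from $v_\mfP(c_4) = 2$, which leaves no room for a $u^{-12}$ rescaling. Lemma~\ref{image of inertia comparision}(1) then delivers $\#\bar{\rho}_{E,p}(I_\mfP) \in \{3,6\}$, completing the dichotomy.
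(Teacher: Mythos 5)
Your proof is correct and follows essentially the same route as the paper: split on whether $\mfP$ divides $ab$, compute $v_\mfP(j_E)$ and $v_\mfP(\Delta_E)$ using the explicit formulas for the Frey curve, and invoke Lemma~\ref{criteria for potentially multiplicative reduction} in the potentially multiplicative case and Lemma~\ref{image of inertia comparision} in the potentially good case. Your extra observations---that the subcases with $\mfP \mid b$ or with $v_\mfP(B)=2$ are in fact vacuous because $3v_\mfP(c)$ cannot equal $1$ or $2$, and that minimality at $\mfP$ is guaranteed (indeed $v_\mfP(\Delta_E)<12$ already suffices)---are correct refinements of points the paper leaves implicit.
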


\begin{proof}
If $\mfP |ab$, then by Lemma~\ref{reduction on T and S}, we have  $p | \#\bar{\rho}_{E,p}(I_\mfP)$. 
Suppose $\mfP \nmid ab$. Now, recall that $ j_E=3^{3} \frac{Cc^3(9Aa^p+Bb^p)^3}{AB^3(ab^3)^p}=3^{3} \frac{(Aa^p+Bb^p)(9Aa^p+Bb^p)^3}{AB^3(ab^3)^p}.$ Then $v_\mfP(j_E)=3 +v_\mfP(Aa^p+Bb^p)+ 3v_\mfP (9Aa^p+Bb^p)- v_\mfP(AB^3)$. Now,
\begin{itemize}
 \item if $v_\mfP(B)=0$, then  $v_\mfP(j_E)= 3-v_\mfP(A)>0$,
 \item if $v_\mfP(B)=2$, then $v_\mfP(j_E)\geq 3 +v_\mfP(A)+ 3v_\mfP (B)- v_\mfP(AB^3) >0$. Hence, $E$ has potential good reduction at $\mfP$.
\end{itemize}
Since $\Delta_E=3^3AB^3C^8(ab^3)^p$, $v_\mfP(\Delta_E)=3+ v_\mfP(AB^3)$. The hypothesis on the valuations of $A,B$ implies $v_\mfP(AB^3)=1$ or $7$, hence $ v_\mfP(\Delta_E)=4$ or $10$. By Lemma~\ref{image of inertia comparision}, we have $ \#\bar{\rho}_{E,p}(I_\mfP)=3$ or $6$. So, we are done with the proof of the lemma.
%
%
\end{proof}

\subsection{Proof of Theorem~\ref{main result1 for Ax^p+By^p=qz^3 over K}}	
The proof of this theorem depends on the following result:
\begin{thm}
\label{auxilary result x^2=By^p+2^rz^p over K}
Let $K$ be a totally real field satisfying $(ES)$. Assume $3$ is inert in $K$, and $v_\mfP(A)=1$, $ v_\mfP(B) \in \{0,2\}$. 
Then, there is a constant $V=V_{K,A,B,C}>0$ (depending on $K,A,B,C$) such that the following hold. Let $(a,b,c)\in \mcO_K^3  \setminus S$ be a non-trivial primitive solution to the equation~\eqref{Ax^p+By^p=qz^3} with exponent $p >V$, and let $E$ be the Frey curve as in ~\eqref{Frey curve for x^2=By^p+Cz^p of Type I}. Then there exists an elliptic curve $E^\prime/K$ such that:

\begin{enumerate}
	\item $E^\prime/K$ has good reduction away from $S_K^{\prime}$ and has a non-trivial $3$-torsion point, and
	$\bar{\rho}_{E,p} \sim\bar{\rho}_{E^\prime,p}$;
	\item  For $\mfP \in S_K$, either $v_\mfP(j_{E^\prime})<0$ or $v_\mfP(j_{E^\prime}) \equiv 0 \text{ or } 2 \pmod 3$.
\end{enumerate}
\end{thm}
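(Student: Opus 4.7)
The plan is to follow the modular-method skeleton of the proof of Theorem~\ref{auxilary result x^2=By^p+Cz^p over W_K} (modularity, semi-stability away from $S_K^{\prime}$, level-lowering, descent to an elliptic curve via Eichler--Shimura, isogeny to get a $3$-torsion point), upgrading the two places where $(a,b,c) \in W_K$ was used, and then to refine the conclusion at the unique $\mfP \in S_K$ by a finer case analysis driven by Lemmas~\ref{reduction on T and S x^2=By^p+Cz^p Type II over K} and~\ref{image of inertia comparision}.

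Concretely, I first invoke Theorem~\ref{modularity of Frey curve of x^2=By^p+2^rz^p over K} to obtain modularity of $E = E_{a,b,c}$ for all $p > D_{K,A,B,C}$, apply Lemma~\ref{reduction away from S} to obtain semi-stability of $E$ away from $S_K^{\prime}$ together with $p \mid v_\mfq(\Delta_E)$ at those primes, and pass to the Galois closure of $K$ if necessary so that $\bar\rho_{E,p}$ is irreducible for $p \gg 0$ by~\cite[Theorem 2]{FS15 Irred}. Level-lowering (Theorem~\ref{level lowering of mod $p$ repr}) then produces a Hilbert modular newform $f$ over $K$ of parallel weight $2$, level $\mfn_p$, and a prime $\omega \mid p$ of $\Q_f$ with $\bar\rho_{E,p} \sim \bar\rho_{f,\omega}$; enlarging the constant, the Mazur-type arguments of~\cite[\S 4]{BS04},~\cite[Proposition 15.4.2]{C07},~\cite[\S 4]{FS15} force $\Q_f = \Q$.

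The next task is to realise $f$ by an elliptic curve $E_f/K$ of conductor $\mfn_p$. This is where the proof of Theorem~\ref{auxilary result x^2=By^p+Cz^p over W_K} relied on~\cite[Corollary 2.2]{FS15}, which requires potential multiplicative reduction at $\mfP$; since $E$ may now have potential good reduction at $\mfP$ (see below), that route is no longer universally available, and I instead invoke the hypothesis $(ES)$: either $[K:\Q]$ is odd, in which case Darmon's~\cite[Theorem 7.7]{D04} supplies $E_f$, or Conjecture~\ref{ES conj} does so directly. Since $\mfn_p$ is supported on $S_K^{\prime}$, $E_f$ has good reduction away from $S_K^{\prime}$. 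Following the argument on~\cite[page 1247]{M22} and enlarging $V$ again, I replace $E_f$ by a $K$-isogenous curve $E^\prime$ possessing a non-trivial $3$-torsion point, and for $p$ exceeding the degree of that isogeny, $\bar\rho_{E,p} \sim \bar\rho_{E^\prime,p}$ is preserved.

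The behaviour at $\mfP$ is then settled by combining Lemmas~\ref{reduction on T and S x^2=By^p+Cz^p Type II over K} and~\ref{image of inertia comparision}. With the hypotheses $v_\mfP(A)=1$ and $v_\mfP(B) \in \{0,2\}$, Lemma~\ref{reduction on T and S x^2=By^p+Cz^p Type II over K} gives the dichotomy $p \mid \#\bar\rho_{E,p}(I_\mfP)$ or $\#\bar\rho_{E,p}(I_\mfP) \in \{3,6\}$, inherited by $E^\prime$ via $\bar\rho_{E,p} \sim \bar\rho_{E^\prime,p}$. In the first case Lemma~\ref{criteria for potentially multiplicative reduction} gives $v_\mfP(j_{E^\prime}) < 0$. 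In the second, Lemma~\ref{image of inertia comparision}(2) (applicable because $3$ is unramified in $K$) gives $v_\mfP(\Delta_{E^\prime}) \in \{4,6,10,12\}$ on a minimal model at $\mfP$; potential good reduction forces $v_\mfP(j_{E^\prime}) \geq 0$, and the identity $v_\mfP(j_{E^\prime}) = 3v_\mfP(c_4) - v_\mfP(\Delta_{E^\prime})$ yields $v_\mfP(j_{E^\prime}) \equiv -v_\mfP(\Delta_{E^\prime}) \pmod 3$, which is $0$ when $v_\mfP(\Delta_{E^\prime}) \in \{6,12\}$ and $2$ when $v_\mfP(\Delta_{E^\prime}) \in \{4,10\}$. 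The main obstacle is precisely the descent from the Hilbert newform $f$ to an elliptic curve in the potentially good case, which forces the introduction of $(ES)$ into the hypotheses; every other step is essentially a verification parallel to the $W_K$ proof.
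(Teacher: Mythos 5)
Your proposal is correct and follows essentially the same route as the paper: modularity, semi-stability away from $S_K^{\prime}$, irreducibility, level-lowering, descent to $E_f$ and an isogenous curve with a $3$-torsion point, then the dichotomy at $\mfP$ via Lemmas~\ref{reduction on T and S x^2=By^p+Cz^p Type II over K} and~\ref{image of inertia comparision} combined with $v_\mfP(j_{E^\prime}) \equiv -v_\mfP(\Delta_{E^\prime}) \pmod 3$. You are in fact more explicit than the paper about why the hypothesis $(ES)$ (rather than~\cite[Corollary 2.2]{FS15}) is needed for the descent in the potentially good reduction case, which is exactly the right reading of the theorem's hypotheses.
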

\begin{proof}
Arguing as in the proof of Theorem~\ref{auxilary result x^2=By^p+Cz^p over W_K}, the first part of Theorem~\ref{auxilary result x^2=By^p+2^rz^p over K} follows from ~\cite[Theorem 2]{FS15 Irred}, Theorem~\ref{modularity of Frey curve of x^2=By^p+2^rz^p over K}, Lemma~\ref{reduction away from S} and Theorem~\ref{level lowering of mod $p$ repr}. Let $\mfP \in S_K$ be the unique prime lying above $3$. If $p | \# \bar{\rho}_{E,p}(I_\mfP)= \# \bar{\rho}_{E^\prime,p}(I_\mfP)$, then by Lemma~\ref{criteria for potentially multiplicative reduction}, we get $v_\mfP(j_{E^\prime})<0$. If $p \nmid \# \bar{\rho}_{E,p}(I_\mfP)$, then by Lemma~\ref{reduction on T and S x^2=By^p+Cz^p Type II over K}, we conclude that $\#\bar{\rho}_{E^\prime,p}(I_\mfP)= \#\bar{\rho}_{E,p}(I_\mfP) \in \{3,6\}$. If $v_\mfP(j_{E^\prime}) < 0$, then we are done. If $v_\mfP(j_{E^\prime}) \geq 0$, then by Lemma~\ref{image of inertia comparision}, we have $v_\mfP(\Delta_{E^\prime}) \equiv 0 \text{ or } 1 \pmod 3$. Since $j_{E^\prime} = \frac{c_4^3}{\Delta_{E'}}$, $v_\mfP(j_{E^\prime}) \equiv - v_\mfP(\Delta_{E^\prime}) \pmod3$. Hence, $v_\mfP(j_{E^\prime}) \equiv 0 \text{ or } 2 \pmod 3$. This completes the proof of the theorem.
\end{proof}

\begin{proof}[Proof of Theorem~\ref{main result1 for Ax^p+By^p=qz^3 over K}]
Let $(a,b,c)\in \mcO_K^3  \setminus S$ be a non-trivial primitive solution to the equation~\eqref{Ax^p+By^p=qz^3} with exponent $p >V$, where $V=V_{K,A,B,C}$ be the constant as in Theorem~\ref{auxilary result x^2=By^p+2^rz^p over K}. By Theorem~\ref{auxilary result x^2=By^p+2^rz^p over K}, there exists an elliptic curve $E^\prime/K$ having a non-trivial $3$-torsion point and good reduction away from $S_K^{\prime}$. 
By ~\eqref{assumption for main result1 x^2=By^p+2^rz^p over K}, for every solution $(\alpha, \beta, \gamma) \in \mcO_{S_K^{\prime}}^\ast \times \mcO_{S_K^{\prime}}^\ast \times \mcO_{S_K^{\prime}}$ to $\alpha+\beta=\gamma^3$, the prime $\mfP$ satisfies $v_\mfP(\alpha\beta^{-1}) =2$. 

Now, arguing as in the proof of Theorem~\ref{main result1 for x^2=By^p+Cz^p Type II}, we find $v_\mfP(j_{E^\prime}) \geq 0$ by using $|v_\mfP(\alpha\beta^{-1}) |=2 < 3v_\mfP(3)=3$.
Recall that $j_{E^\prime} = \frac{(\mu+27)(\mu+3)^3}{\mu}$, where $\mu=27 \alpha\beta^{-1}$. This implies $v_\mfP(j_{E^\prime}) \equiv v_\mfP(\mu+27)-v_\mfP(\mu) \pmod3 $. 
Since $v_\mfP(\alpha\beta^{-1}) =2$, $v_\mfP(\mu)=5$. Hence
$v_\mfP(j_{E^\prime}) \equiv 3-5 \equiv 1 \pmod 3$.
Therefore, $v_\mfP(j_{E^\prime}) \geq0$ and $ v_\mfP(j_{E^\prime}) \equiv 1 \pmod 3$, which contradicts Theorem~\ref{auxilary result x^2=By^p+2^rz^p over K}. We are done.
\end{proof}
We conclude this section with the following proposition:
\begin{prop}
	\label{prop for S is empty}
If $A,B,C \in \Z \setminus \{0\}$, then in Theorem~\ref{main result1 for Ax^p+By^p=qz^3 over K}, we can take $S=\emptyset$.
\end{prop}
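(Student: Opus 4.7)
The plan is to show that under the full set of hypotheses of Theorem~\ref{main result1 for Ax^p+By^p=qz^3 over K} together with $A, B, C \in \Z \setminus \{0\}$, no solution $(u, \epsilon u, c)$ with $\epsilon \in \{\pm 1\}$, $u \in \mcO_K^\ast$, and $c \in \mcO_K \setminus \{0\}$ can satisfy~\eqref{Ax^p+By^p=qz^3}. This will force $S = \emptyset$. Suppose for contradiction such a solution exists. Since $p$ is an odd prime, $(\epsilon u)^p = \epsilon u^p$, so~\eqref{Ax^p+By^p=qz^3} becomes $(A + \epsilon B) u^p = C c^3$; the case $A + \epsilon B = 0$ forces $c = 0$ and is excluded.

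The key step is to extract from this an $S_K^\prime$-unit triple to which assumption~\eqref{assumption for main result1 x^2=By^p+2^rz^p over K} applies. I would set
\[
\alpha := A u^p / C, \qquad \beta := \epsilon B u^p / C, \qquad \gamma := c,
\]
so that $\alpha + \beta = (A + \epsilon B) u^p / C = c^3 = \gamma^3$. For any prime $\mfQ \notin S_K^\prime = \{ \mfP \in P : \mfP | 3ABC \}$, one has $v_\mfQ(A) = v_\mfQ(C) = 0$ by definition of $S_K^\prime$ and $v_\mfQ(u) = 0$ since $u$ is a unit; hence $v_\mfQ(\alpha) = v_\mfQ(\beta) = 0$, so $\alpha, \beta \in \mcO_{S_K^\prime}^\ast$, while $\gamma \in \mcO_K \subseteq \mcO_{S_K^\prime}$.

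By~\eqref{assumption for main result1 x^2=By^p+2^rz^p over K}, the triple must satisfy $v_\mfP(\alpha \beta^{-1}) = 2$. However, a direct computation gives
\[
v_\mfP(\alpha \beta^{-1}) = v_\mfP(\epsilon A/B) = v_\mfP(A) - v_\mfP(B) = 1 - v_\mfP(B),
\]
which equals $1$ when $v_\mfP(B) = 0$ and $-1$ when $v_\mfP(B) = 2$ (by the standing hypothesis $v_\mfP(A) = 1$, $v_\mfP(B) \in \{0, 2\}$), never $2$. This contradiction eliminates all putative solutions in $S$, proving $S = \emptyset$. The only delicate point is verifying that $(\alpha, \beta, \gamma)$ genuinely lies in $\mcO_{S_K^\prime}^\ast \times \mcO_{S_K^\prime}^\ast \times \mcO_{S_K^\prime}$; once that is done, the rest is a one-line valuation computation, and I anticipate no serious obstacle.
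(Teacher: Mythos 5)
There is a genuine gap here. Your argument applies hypothesis~\eqref{assumption for main result1 x^2=By^p+2^rz^p over K} to the triple $(Au^p/C,\ \epsilon Bu^p/C,\ c)$ manufactured from a putative solution in $S$, and concludes that such solutions cannot exist. But that hypothesis, read as a literal universal statement over all $(\alpha,\beta,\gamma)\in\mcO_{S_K^{\prime}}^{\ast}\times\mcO_{S_K^{\prime}}^{\ast}\times\mcO_{S_K^{\prime}}$ with $\alpha+\beta=\gamma^3$, can never hold: swapping $\alpha$ and $\beta$ replaces $v_\mfP(\alpha\beta^{-1})$ by its negative, so it cannot always equal $2$, and $(1,-1,0)$ is already a solution with $v_\mfP(\alpha\beta^{-1})=0$. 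So the only sense in which your deduction ``works'' is vacuous. The condition is intended to constrain the specific normalized triples $\alpha=\mu/u$, $\beta=27/u$ that the proof of Theorem~\ref{main result1 for Ax^p+By^p=qz^3 over K} extracts from an elliptic curve with a non-trivial $3$-torsion point and good reduction outside $S_K^{\prime}$; it carries no information about the unrelated triple you build from the Fermat equation itself. A telling symptom is that your proof never invokes the hypothesis $A,B,C\in\Z\setminus\{0\}$; if the argument were sound, that hypothesis would be superfluous, and it is not.

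The set $S$ is excluded in Theorem~\ref{main result1 for Ax^p+By^p=qz^3 over K} for one reason only: in the modularity step (Theorem~\ref{modularity of Frey curve of x^2=By^p+2^rz^p over K}), a solution with $b=\pm a$ and $a\in\mcO_K^{\ast}$ cannot be handled, because the equation $\mu(E)=\mu_k$ then fails to determine the exponent $p$. The paper's proof of the proposition therefore revisits exactly that step: when $A,B,C\in\Z$, such a solution forces $\mu(E)=\pm B/A\in\Q^{\ast}$, and one may assume without loss of generality that the finitely many exceptional values $\mu_1,\dots,\mu_n$ lie outside $\Q^{\ast}$, since the Frey curves with rational $\mu(E)$ are already known to be modular. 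Hence the Frey curve is modular for every non-trivial primitive $(a,b,c)\in\mcO_K^3$ and $p\gg0$, and the remainder of the proof of Theorem~\ref{main result1 for Ax^p+By^p=qz^3 over K} goes through unchanged with $S=\emptyset$. Your proof needs to be replaced by an argument of this kind, addressed at the modularity of the Frey curve rather than at the $S$-unit equation.
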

\begin{proof}
	In order to prove Theorem~\ref{main result1 for Ax^p+By^p=qz^3 over K} for $S=\emptyset$, it is enough to prove Theorem~\ref{modularity of Frey curve of x^2=By^p+2^rz^p over K} for $S=\emptyset$. 
	Arguing as in the proof of Theorem~\ref{modularity of Frey curve of x^2=By^p+2^rz^p over K} for $(a,b,c)\in \mcO_K^3$, there exists $\mu_k \in K$ with $1 \leq k \leq n$ such that $E/K$ is modular for all $\mu(E) \notin\{\mu_1, \mu_2, ..., \mu_n\}$, where $ j_E=3^{3}\frac{(1+\mu(E))(9+\mu(E))^3}{\mu(E)^3}$ for $\mu(E)= \frac{Bb^p}{Aa^p}$. Without loss of generality, we can assume $\mu_1, \mu_2, ..., \mu_n \notin \Q^\ast$, since elliptic curves over $\Q$ are modular. If $\mu= \mu_k$ for some $k \in \{1, 2, \ldots, n \}$, then $\left(\frac{b}{a} \right)^p= \frac{A\mu_k}{B}$.
	The above equation determines $p$ uniquely; we denote it $p_k$. If not, we get $\frac{b}{a} =\pm1$ and hence $\mu_k=\pm\frac{B}{A} \in \Q^\ast$, which is a contradiction. Now, argue as in Theorem~\ref{modularity of Frey curve of x^2=By^p+2^rz^p over K} to complete the proof of the proposition.
\end{proof}

\section{Local criteria of $K$}
\label{section for loc criteria}
In this section, we present several local criteria of $K$ which imply Theorem~\ref{main result1 for x^2=By^p+Cz^p Type I}. First, we look at the case when $K$ is a quadratic field.
\begin{prop}[Quadratic]
	\label{loc crit for real quadratic field over W_K}
Let $d \geq 2$ be a square-free integer satisfying $d \equiv 2 \pmod3$, and let  $K=\Q(\sqrt{d})$. Assume $3 \nmid h_K h_{K(\zeta_3)}$. If $A,B,C \in \{u3^r \ | \ u \in \mcO_K^\ast,\ r \in \Z_{\geq 0}\}$, then the conclusion of Theorem~\ref{main result1 for x^2=By^p+Cz^p Type I} holds over $K$.
\end{prop}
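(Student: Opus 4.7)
The strategy is to apply Proposition~\ref{main result2 for x^2=By^p+Cz^p Type I} to $K = \Q(\sqrt d)$. First I verify the structural hypotheses: since $d \equiv 2 \pmod 3$, $d$ is not a square modulo $3$ and so $3$ is inert in $K$; thus $S_K = \{\mfP\}$ with $\mfP = (3)$ principal and $v_\mfP(3) = 1$. The shape of $A,B,C$ makes $S_K^\prime = S_K$, which is principal with $\Cl_{S_K^\prime}(K)[3]=1$ because $3 \nmid h_K$, and the assumption $3 \nmid h_K h_{K(\zeta_3)}$ is given.

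The main task is to verify the Diophantine hypothesis: every $(\alpha,\gamma) \in \mcO_{S_K}^\ast \times \mcO_{S_K}$ solving $\alpha+1 = \gamma^3$ with $v_\mfP(\alpha) \geq 0$ satisfies $v_\mfP(\alpha) \leq 3v_\mfP(3) = 3$. Suppose for contradiction $v_\mfP(\alpha) \geq 4$. A short local analysis forces $\gamma \in \mcO_K$, gives $\gamma \equiv 1 \pmod \mfP$ (since cubing is a bijection on $\F_9^\ast$), and via the expansion $\gamma^3 - 1 = 3(\gamma-1) + 3(\gamma-1)^2 + (\gamma-1)^3$ yields $v_\mfP(\gamma-1) = v_\mfP(\alpha) - 1 \geq 3$. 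Factoring $\alpha = (\gamma-1)(\gamma^2+\gamma+1)$ into two $S_K$-units and writing $\gamma - 1 = 3^m u_1$ with $u_1 \in \mcO_K^\ast$ and $m \geq 3$ produces
$W := (\gamma^2+\gamma+1)/3 = 1 + 3^m u_1 + 3^{2m-1} u_1^2 \in \mcO_K^\ast$
with $W \equiv 1 \pmod{\mfP^3}$.

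The key step is a Kummer-theoretic argument. The prime above $\mfP$ in $K(\zeta_3)$ is $\mfP' = (1-\zeta_3)$ with $v_{\mfP'}(3) = 2$, so $W \equiv 1 \pmod{\mfP'^6}$; as the cube map $(1+\mfP'^k) \to (1+\mfP'^{k+2})$ is a bijection for $k \geq 2$, this shows $W$ is a cube in the completion $\mcO_{K(\zeta_3),\mfP'}^\ast$. Together with $W$ being a global unit (valuation $0$ at every other prime, trivially divisible by $3$), the Kummer extension $L := K(\zeta_3, W^{1/3})/K(\zeta_3)$ is unramified at every finite place. If $W$ were not a cube in $K(\zeta_3)^\ast$, $L/K(\zeta_3)$ would be an unramified cyclic degree-$3$ extension, contradicting $3 \nmid h_{K(\zeta_3)}$. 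Hence $W \in (K(\zeta_3)^\ast)^3$, and a brief Galois-cohomology check (using that $K(\zeta_3)/K$ is quadratic with $W \in K^\ast$, so any $y \in K(\zeta_3)^\ast$ with $y^3 = W$ satisfies $\sigma(y)/y \in \mu_3$, and each case produces $W \in (K^\ast)^3$) gives $W = \delta^3$ for some $\delta \in \mcO_K^\ast$ with $v_\mfP(\delta - 1) = m-1 \geq 2$.

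It remains to close out the contradiction from the two expressions for $W$. Writing $\delta = 1 + 3^{m-1} v$ with $v \in \mcO_K$ and $v_\mfP(v) = 0$, equating $\delta^3 = W$ and simplifying yields
\[(v - u_1)\bigl(1 + 3^{m-1}(u_1 + v)\bigr) = -3^{2m-3} v^3\]
in $\mcO_K$. The main obstacle is finishing this step: iterating the congruence $v \equiv u_1 \pmod{\mfP^3}$ forced by this identity against the norm identity $N_{K/\Q}(\delta) = \pm 1$ and the rigidity of the rank-$1$ unit group $\mcO_K^\ast$ to rule out any non-trivial $(u_1,v)$. Once this last obstruction is removed, Proposition~\ref{main result2 for x^2=By^p+Cz^p Type I} delivers the conclusion of Theorem~\ref{main result1 for x^2=By^p+Cz^p Type I} over $K$, completing the proof.
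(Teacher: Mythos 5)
Your reduction to Proposition~\ref{main result2 for x^2=By^p+Cz^p Type I} is exactly the route the paper takes, and your verification of the structural hypotheses ($3$ inert since $d\equiv 2\pmod 3$, $S_K^\prime=S_K=\{\mfP\}$ principal, $\Cl_{S_K^\prime}(K)[3]=1$ from $3\nmid h_K$) is correct. The difference is in the Diophantine hypothesis \eqref{assumption for main result2 for x^2=By^p+Cz^p Type I}: the paper disposes of it in one line by invoking the argument of \cite[Theorem 12]{M22}, whereas you attempt a self-contained verification. Your intermediate steps are sound as far as they go --- the deduction $v_\mfP(\gamma-1)=v_\mfP(\alpha)-1$, the fact that $W=(\gamma^2+\gamma+1)/3$ is a unit congruent to $1$ bmod $\mfP^3$, the local cube criterion at the ramified prime of $K(\zeta_3)$, the everywhere-unramified Kummer extension forcing $W\in (K(\zeta_3)^\ast)^3$ via $3\nmid h_{K(\zeta_3)}$, and the descent to $W=\delta^3$ with $\delta\in\mcO_K^\ast$ are all in the spirit of the techniques used elsewhere in this paper (compare the second bullet in the proof of Proposition~\ref{main result2 for x^2=By^p+Cz^p Type I}).

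However, the proof is not complete, and you say so yourself: after deriving $(v-u_1)\bigl(1+3^{m-1}(u_1+v)\bigr)=-3^{2m-3}v^3$ you state that ``the main obstacle is finishing this step'' and that the conclusion follows ``once this last obstruction is removed.'' That missing step is precisely the content of the proposition: everything before it only repackages the assumption $v_\mfP(\alpha)\geq 4$ into the statement that a certain unit is a cube, and the displayed identity by itself yields only the congruence $v\equiv u_1\pmod{\mfP^{2m-3}}$, which is not visibly contradictory --- the appeal to ``the norm identity $N_{K/\Q}(\delta)=\pm 1$ and the rigidity of the rank-$1$ unit group'' is a hope, not an argument, and it is not clear it can be made to work from this identity alone. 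So the key bound $v_\mfP(\alpha)\leq 3v_\mfP(3)$ is never actually established. To close the gap you should either supply the missing final contradiction or, as the paper does, invoke the proof of \cite[Theorem 12]{M22}, which carries out this verification for real quadratic fields.
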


\begin{proof}
	Since $d \equiv 2 \pmod 3$, $3$ is inert in $K$. Let $S_K= \{\mfP\}$. The hypotheses on $A,B,C$ imply that $S_K^\prime =S_K$. Now, arguing as in the proof of \cite[Theorem 12]{M22}, we see that the hypothesis of Proposition~\ref{main result2 for x^2=By^p+Cz^p Type I} is satisfied. Hence, we are done with the proof of proposition.
\end{proof}
 Now, we look at the case when $K$ is of odd degree.
\begin{prop}[Odd degree]
	\label{loc crit for odd degree field over W_K}
	Let $K$ be a field such that $3 \nmid h_K h_{K(\zeta_3)}$ and $n =[K:\Q]$.
	Suppose
	\begin{enumerate}
        \item $q \geq5$ be a rational prime with $\gcd(n, q-1)=1$ and $q$ totally ramifies in $K$,
		\item $3$ is either inert or $3=\mfP^n$ for some principal ideal $\mfP \in P$.
	\end{enumerate}
If $A,B,C \in \{u3^r \ | \ u \in \mcO_K^\ast,\ r \in \Z_{\geq 0}\}$, then the conclusion of Theorem~\ref{main result1 for x^2=By^p+Cz^p Type I} holds over $K$.
\end{prop}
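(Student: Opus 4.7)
The plan is to apply Proposition~\ref{main result2 for x^2=By^p+Cz^p Type II}. Under the assumption that $A, B, C \in \{u\cdot 3^r : u \in \mcO_K^\ast,\ r \in \Z_{\geq 0}\}$, we have $S_K^\prime = S_K$; condition (2) gives $S_K = \{\mfP\}$ with $\mfP$ principal (generated by $3$ in the inert case, and principal by hypothesis in the totally ramified case); and $3 \nmid h_K h_{K(\zeta_3)}$ is assumed. So the only nontrivial check is the arithmetic condition \eqref{assumption for main result2 for x^2=By^p+Cz^p Type I}: every pair $(\alpha,\gamma) \in \mcO_{S_K}^\ast \times \mcO_{S_K}$ with $\alpha+1 = \gamma^3$ and $v_\mfP(\alpha) \geq 0$ must satisfy $v_\mfP(\alpha) \leq 3v_\mfP(3)$. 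This is where the auxiliary prime $q$ of hypothesis (1) is used.

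The key local input is that the reduction map $\mcO_K^\ast \to \F_q^\ast$ has image exactly $\{\pm 1\}$, where $\mfq$ is the unique prime of $K$ above $q$ (so the residue field is $\F_q$ by total ramification). For any $u \in \mcO_K^\ast$ the identity $N_{K/\Q}(u) = \pm 1$ localized at $\mfq$ gives $N_{K_\mfq/\Q_q}(u) \equiv \pm 1 \pmod q$; on the other hand, for a totally ramified local extension of degree $n$ one has $N_{K_\mfq/\Q_q}(u) \equiv \bar u^n \pmod \mfq$. Combining these yields $\bar u^n = \pm 1$ in $\F_q^\ast$, and together with $\bar u^{q-1} = 1$ and $\gcd(n,q-1)=1$ (via B\'ezout), one obtains $\bar u^2 = 1$, forcing $\bar u \in \{\pm 1\}$.

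Now suppose for contradiction that $(\alpha,\gamma)$ is a solution with $v_\mfP(\alpha) > 3v_\mfP(3)$. A Newton-polygon computation on the factorization $\gamma^3 - 1 = \delta(\delta^2 + 3\delta + 3)$ with $\delta = \gamma - 1$ forces $v_\mfP(\delta) > 2 v_\mfP(3)$ and $v_\mfP(\gamma^2 + \gamma + 1) = v_\mfP(3)$. Since the gcd of the two factors of $\alpha = (\gamma-1)(\gamma^2+\gamma+1)$ divides $3\mcO_K \subseteq \mfP$ while their product lies in $\mcO_{S_K}^\ast$, each factor itself lies in $\mcO_{S_K}^\ast$. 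Writing $\gamma - 1 = v\pi^j$ and $\gamma^2+\gamma+1 = w\pi^\ell$ with $v, w \in \mcO_K^\ast$ and $\pi$ a generator of $\mfP$, so that $\bar v, \bar w \in \{\pm 1\}$ by the previous step, the identity $\delta^2 + 3\delta + 3 = \gamma^2 + \gamma + 1$ reduced modulo $\mfq$ becomes
\[
\bar\pi^{2j} + 3\bar v\, \bar\pi^j + 3 \equiv \bar w\, \bar\pi^\ell \pmod \mfq.
\]
Using $\bar\pi^\ell \equiv \pm \bar 3 \pmod \mfq$ (immediate when $\pi = 3$ in the inert case; a consequence of $\pi^n = 3u'$ in the totally ramified case), the identity collapses to a polynomial equation in $\bar 3 \in \F_q^\ast$ whose sign-choices must be ruled out by the strong local constraint $j > 2v_\mfP(3)$. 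The main obstacle is extracting this final combinatorial contradiction rigorously; the argument is to closely parallel the corresponding local-criterion step in~\cite[Theorem 13]{M22} for the pure equation $x^p+y^p=z^3$, where the hypothesis $\gcd(n,q-1)=1$ plays exactly the same role of collapsing the unit image to $\{\pm 1\}$, thereby making the mod-$\mfq$ identity rigid enough to fail.
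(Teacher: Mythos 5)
Your proposal matches the paper's proof: the paper likewise reduces to Proposition~\ref{main result2 for x^2=By^p+Cz^p Type I} (noting that $S_K^\prime=S_K=\{\mfP\}$ is principal under the hypotheses on $A,B,C$ and on $3$, and that $3\nmid h_Kh_{K(\zeta_3)}$ is assumed) and verifies the remaining Diophantine condition \eqref{assumption for main result2 for x^2=By^p+Cz^p Type I} simply by citing the argument of \cite[Theorem 13]{M22}. You actually supply more of that argument than the paper does (the norm computation for the totally ramified prime $q$ together with $\gcd(n,q-1)=1$ forcing units to reduce to $\pm1$ modulo $\mfq$, and the factorization $\gamma^3-1=(\gamma-1)(\gamma^2+\gamma+1)$), so your concluding deferral to \cite{M22} for the final contradiction is no weaker than the paper's own proof.
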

\begin{proof}
Let $\mfP \in S_K$ be the unique prime ideal lying above $3$. By assumption, $S_K$ is principal. The hypotheses on $A,B,C$ imply that $S_K^\prime =S_K$. Now, arguing as in the proof of~\cite[Theorem 13]{M22}, we see that the hypothesis of Proposition~\ref{main result2 for x^2=By^p+Cz^p Type I} is satisfied. Hence, we are done with the proof of proposition.
\end{proof}
  \section*{Acknowledgments}
 The authors are
grateful to the anonymous referee for the mathematical suggestions and
comments which improved the article.   The authors express their sincere gratitude to Prof. Alain Kraus for his help in understanding~\cite{K90}. The first author thanks ISI Delhi
  for their hospitality during the preparation of this article.


\begin{thebibliography}{abc9999}
		\bibitem[BS04]{BS04}
		Bennett, Michael A.; Skinner, Chris M.
		Ternary Diophantine equations via Galois representations and modular forms. Canad. J. Math. 56 (2004), no. 1, 23--54.
		
		\bibitem[BVY04]{BVY04}
		Bennett, Michael A.; Vatsal, Vinayak; Yazdani, Soroosh. Ternary Diophantine equations of signature $(p,p,3)$. Compos. Math. 140 (2004), no. 6, 1399--1416.
		
 		\bibitem[Coh07]{C07}
		Cohen, Henri.
 		Number theory. Vol. II. Analytic and modern tools. Graduate Texts in Mathematics, 240. Springer, New York, 2007.
 		
 		\bibitem[Dar04]{D04}
 		Darmon, Henri.
 		Rational points on modular elliptic curves. CBMS Regional Conference Series in Mathematics, 101. Published for the Conference Board of the Mathematical Sciences, Washington, DC; by the American Mathematical Society, Providence, RI, 2004. 
		
		\bibitem[DG95]{DG95}
    	Darmon, Henri; Granville, Andrew. On the equations $z^m=F(x,y)$ and $Ax^p+By^q=Cz^r$. Bull. London Math. Soc. 27 (1995), no. 6, 513--543.
	
	
\bibitem[DM97]{DM97} 
Darmon, Henri; Merel, Lo\"{i}c.
Winding quotients and some variants of Fermat's last theorem. J. Reine Angew. Math. 490 (1997), 81--100.
	
	
\bibitem[Dec16]{D16}
Deconinck, Heline.
On the generalized Fermat equation over totally real fields. Acta Arith. 173 (2016), no. 3, 225--237.

	
	
%
	
	
	
	
	
	
	
	\bibitem[FLHS15]{FLHS15}
	Freitas, Nuno; Le Hung, Bao V.; Siksek, Samir.
	Elliptic curves over real quadratic fields are modular. Invent. Math. 201 (2015), no. 1, 159--206.
	
	
	
	\bibitem[FS15a]{FS15}
	Freitas, Nuno; Siksek, Samir.
	The asymptotic Fermat's last theorem for five-sixths of real quadratic fields. Compos. Math. 151 (2015), no. 8, 1395--1415.
	
	
	
	\bibitem[FS15b]{FS15 Irred}
	Freitas, Nuno; Siksek, Samir.
	Criteria for irreducibility of mod $p$ representations of Frey curves.
	J. Th\'{e}or. Nombres Bordeaux 27 (2015), no. 1, 67--76.
	
	
	
%
	
	
	
	
	
	

	
	\bibitem[IKO20]{IKO20}
	I\c{s}ik, Erman; Kara, Yasemin; \"Ozman, Ekin.
	On ternary Diophantine equations of signature $(p,p,2)$ over number fields.	Turkish J. Math. 44 (2020), no. 4, 1197--1211.
	
		\bibitem[IKO23]{IKO23}
	Isik, Erman; Kara, Yasemin; \"Ozman, Ekin. On ternary Diophantine equations of signature $(p,p,3)$ over number fields. Canad. J. Math. 75 (2023), no. 4, 1293--1313.
	
	\bibitem[Ivo03]{I03}
	Ivorra, Wilfrid.
	Sur les \'equations $x^p+2^\beta y^p=z^2$ et $x^p+2^\beta y^p=2z^2$. (French) [[On the equations $x^p+2^\beta y^p=z^2$ and $x^p+2^\beta y^p=2z^2$]] Acta Arith. 108 (2003), no. 4, 327--338.
	

	
	
	
	
	
	
%
	
	
	
	
	\bibitem[KS24a]{KS23}
	Kumar, Narasimha; Sahoo, Satyabrat.
    On the solutions of $x^p+y^p=2^rz^p$, $x^p+y^p=z^2$ over totally real fields. Acta Arith. 212 (2024), no. 1, 31--47.
    
    \bibitem[KS24b]{KS}
    Kumar, Narasimha; Sahoo, Satyabrat. 
    On the solutions of $x^2=By^p+Cz^p$ and $2x^2=By^p+Cz^p$ over totally real fields. Ramanujan J. 65 (2024), no. 1, 27--43.
    
    \bibitem[Kra90]{K90}
    Kraus, Alain. Sur le d\'{e}faut de semi-stabilit\'{e} des courbes elliptiques \`{a} r\'{e}duction additive. (French) [[On the failure of semistability of elliptic curves with additive reduction]] Manuscripta Math. 69 (1990), no. 4, 353--385.


	\bibitem[Maz78]{M78}
Mazur, B. Rational isogenies of prime degree (with an appendix by D. Goldfeld). Invent. Math. 44 (1978), no. 2, 129--162.

	\bibitem[Moc22]{M22}
	Mocanu, Diana.
	Asymptotic Fermat for signatures $(p,p,2)$ and $(p,p,3)$ over totally real fields. Mathematika 68 (2022), no. 4, 1233--1257.
	
	
	

	
	
	
	
	\bibitem[Rib90]{R90}
	Ribet, K. A.
	On modular representations of $\Gal (\bar \Q / \Q)$ arising from modular forms. Invent. Math. 100 (1990), no. 2, 431--476.
	
		
    
    \bibitem[Sik03]{S03}
    Siksek, Samir.
    On the Diophantine equation $x^2=y^p+2^kz^p$. J. Th\'eor. Nombres Bordeaux 15 (2003), no. 3, 839--846.
	
	\bibitem[Sil94]{S94}
	Silverman, Joseph H.
	Advanced topics in the arithmetic of elliptic curves. Graduate Texts in Mathematics, 151. Springer-Verlag, New York, 1994.
	
	
	
	
	
	
	
%
%
%
%
%
	 
	
	
	
	
	\bibitem[TW95]{TW95}
	Taylor, Richard; Wiles, Andrew. Ring-theoretic properties of certain Hecke algebras. Ann. of Math. (2) 141 (1995), no. 3, 553--572.
	
	\bibitem[Wil95]{W95}
	Wiles, Andrew.
	Modular elliptic curves and Fermat's last theorem. Ann. of Math. (2) 141 (1995), no. 3, 443--551.
	
	
	
	
	
	
	
	
	
	
	
\end{thebibliography}
\end{document}